\def\R{{\mathbb R}}
\def\N{{\mathbb N}}
\def\Z{{\mathbb Z}}
\newcommand{\M}{\mathcal M}
\newcommand{\rel}{\mathcal R}
\newcommand{\T}{\mathcal T}
\renewcommand{\S}{\mathcal S}
\newcommand{\E}{\mathcal E}
\newcommand{\QP}{\mathcal{BQP}}
\newcommand{\proj}{\operatorname{proj}} 
\newcommand{\sign}{\operatorname{sign}}
\newtheorem{observation}{Observation}
\newtheorem{proposition}{Proposition}
\newtheorem{definition}{Definition}
\newtheorem{lemma}{Lemma}
\newtheorem{theorem}{Theorem}
\newtheorem{corollary}{Corollary}
\newcommand{\transp}{\mathsf T}
\def\dim{\mathop{\rm dim}}
\def\int{\mathop{\rm int}}
\def\conv{\mathop{\rm conv}}
\newcounter{claim} 
\newcommand{\setwindow}[5]{
\def\xmin{#1}%
\def\ymin{#2}%
\def\xmax{#3}%
\def\ymax{#4}%
\pstFPsub\viewingwidth{#3}{#1}%
\pstFPdiv\result{\strip@pt#5}{\viewingwidth}%
\psset{unit=\result pt}}
\author[1]{Alberto Del Pia\thanks{delpia@wisc.edu}}
\author[2]{Jeff Linderoth\thanks{linderoth@wisc.edu}}
\author[3]{Haoran Zhu\thanks{Corresponding author, hzhu94@wisc.edu}} 
\affil[1,2,3]{\small Department of Industrial and Systems Engineering, University of Wisconsin-Madison}
\affil[1]{\small Wisconsin Institute for Discovery, University of Wisconsin-Madison}
\title{Relaxations and Cutting Planes for Linear Programs with Complementarity Constraints}
\date{}
\begin{document}
\maketitle

\begin{abstract}
We study relaxations for linear programs with complementarity constraints, especially instances whose complementary pairs of variables are not independent.  Our formulation is based on identifying vertex covers of the conflict graph of the instance and generalizes the extended reformulation-linearization technique of Nguyen, Richard, and Tawarmalani to instances with general complementarity conditions between variables.  We demonstrate how to obtain strong cutting planes for our formulation from both the stable set polytope and the boolean quadric polytope associated with a complete bipartite graph. Through an extensive computational study for three types of practical problems, we assess the performance of our proposed linear relaxation and new cutting-planes in terms of the optimality gap closed.

\emph{Key words:}
Complementarity constraints; Cutting-planes; Convex hull; Disjunctive programming; Boolean quadric polytope
\end{abstract}

\section{Introduction}
\label{sec: intro}

In this paper we study the \emph{linear programming problem with complementarity constraints} (LPCC) of the following form:
\begin{align}
\label{LPCC}
\tag{LPCC}
\begin{split} 
\text{maximize} \quad & \pmb{a}^\transp \pmb{x} + \pmb{b}^\transp \pmb{y}\\
\text{subject to} \quad & A \textbf{\textit{x}} + B \textbf{\textit{y}} \le \textbf{\textit{d}}, \\
& 0 \leq \textbf{\textit{y}} \leq \textbf{1}_n, \\
&  y_i \cdot y_j = 0 \quad \forall \{i,j\} \in E.
\end{split}
\end{align}
Here $A \in \R^{m \times p}, B \in \R^{m \times n}, \textbf{\textit{a}} \in \R^p, \textbf{\textit{b}} \in \R^n, \textbf{\textit{d}} \in \R^m$ for $m,n,p \in \N$, and $\textbf{1}_n$ denotes the $n$-dimensional vector with all components having value one. We assume that finite upper bounds on the variables \textbf{\textit{y}} are given, so it is without loss of generality that we may scale their upper bounds to be $\textbf{1}_n$. 
The set $E$ consists of unordered pairs of elements in $[n] := \{1, \ldots, n\}$, and we denote by $G = ([n], E)$ the so-called \emph{conflict graph} of \eqref{LPCC}.
Throughout the paper we assume that $G$ is simple, so it contains no loops or parallel edges.
We denote by $P^\perp$ the feasible region of \eqref{LPCC}, and by 
$$
P = \{(\textbf{\textit{x}}, \textbf{\textit{y}}) \in \R^{p+n} \mid A \textbf{\textit{x}} + B \textbf{\textit{y}} \leq \textbf{\textit{d}}, 0 \leq \textbf{\textit{y}} \leq \textbf{1}_n\}
$$
the \emph{linear programming (LP) relaxation} of $P^\perp$.

The conditions $y_i \cdot y_j = 0, \{i,j\} \in E,$ known as \emph{complementarity constraints}, are a special case of \emph{Special Ordered Sets of type 1} (SOS1) \cite{beale1970special}, which is a set of variables in which at most one may take a non-zero value.
Practical problems involving complementarity constraints arise extensively in business, engineering, and economics, and we refer the reader to \cite{ferris1997engineering} for a detailed survey of applications. 
In many applications of LPCC, its associated conflict graph is \emph{1-regular}, i.e., every variable $y_i, i \in [n]$ is contained in exactly one complementarity constraint. 
An important example of this case arises from imposing optimality conditions on decisions in a multi-level setting.  In other applications, however, the conflict graph $G$ is more general.  For example, consider scheduling a set of jobs $[n]$, where each job $i \in [n]$ has processing times $a_i$ and benefit $c_i$. The goal is to optimally select a subset $I \subseteq [n]$ of jobs to complete in a given time limit $b$, where jobs may be partially executed, and certain pairs of jobs may not both be selected.  This problem, known as the \emph{continuous knapsack problem with conflicts}, can be formulated as an LPCC:
$$
\max \big\{\textbf{\textit{c}}^\transp \textbf{\textit{y}} \mid \textbf{\textit{a}}^\transp \textbf{\textit{y}} \leq b, 0 \leq \textbf{\textit{y}} \leq \textbf{1}_n, \ y_i \cdot y_j = 0 \ \forall \{i,j\} \in E \big\}.
$$

In this paper, we propose a new extended formulation for $\conv(P^\perp)$ whose natural relaxation is strong, and we demonstrate additional cutting-planes to strengthen the relaxation. We call a set $\mathcal R$ an \emph{extended relaxation} for a set $X$ in $\R^n$ if $\dim(\mathcal R)>n$ and $X \subseteq \proj_{\R^n} \mathcal R$.  
A recent paper by Nguyen et al. \cite{MR4270189} is closely related to our research.  The authors propose an extension of the well-known reformulation-linearization technique (RLT) of Sherali and Adams \cite{MR1061981}, coined ERLT, to the LPCC with a 1-regular conflict graph, and show that the resulting extended relaxation is stronger.  As we will see in Section \ref{sec: 1-r}, when the conflict graph is 1-regular, our extended relaxation coincides with the relaxation given by the first-level ERLT.  The paper \cite{MR4270189} also describes the standard, iterative, multi-level process to strengthen ERLT.  From a computational perspective, the first level ERLT relaxation is the most relevant, and throughout this paper, when we reference the ERLT relaxation, we mean the first-level ERLT.

For \eqref{LPCC} with a 1-regular conflict graph, many approaches have been given for strengthening the LP relaxation. 
Ibaraki \cite{ibaraki1973use} applies the work of Balas \cite{MR290793} to obtain a family of cutting-planes that can be derived from the simplex tableau of the linear programming relaxation of LPCC, and an improvement to these cuts is proposed by Sherali et al. \cite{MR1397652} using disjunctive programming techniques. 
Jeroslow \cite{jeroslow1978cutting} gave a finite sequential procedure for generating all valid inequalities of the feasible set when every variable appears in some complementarity constraint. Moreover, Hu et al. \cite{MR2403040, MR2914070} studied the complementarity constraints arising from KKT conditions for linear programming problems with quadratic objective, and proposed a Benders type approach for solving the resulting LPCC. 
When the complementarity constraints are a collection of disjoint SOS1, the corresponding conflict graph is the union of a set of cliques. In this case, de Farias et al. \cite{de2001branch, de2002facets, de2014branch} analyzed the polyhedral properties of the feasible sets that arise in this scenario and derived cutting-planes by a sequential lifting procedure of cover inequalities. Moreover, with the setting, Del Pia et al. \cite{del2022new} studied how to derive strong cutting-planes from another complementary concept of cover called ``pack".
Most existing research for the case of a general conflict graphs focuses on integer programming \cite{MR3530661, MR1780979, MR1795056, hifi2007reduction, MR4330950, pferschy2009knapsack}. Recently, Fischer and Pfetsch \cite{MR3773088} investigated a branch-and-cut algorithm to solve LPCC with a general conflict graph and demonstrated its effectiveness by an extensive computational study.

The structure of this paper is as follows: In Section \ref{sec: ext_relax}, using concepts of vertex cover in graph theory and motivated by the disjunctive formulation, we present an extended relaxation for $\conv(P^\perp)$. We show that for the special case where the conflict graph $G$ is 1-regular, our extended relaxation coincides with the ERLT relaxation introduced by Nguyen et al. \cite{MR4270189}. 
In particular, from the disjunctive perspective of our formulation and the idealness of Balas' disjunctive formulation, we are able to derive Theorem 2 and Theorem 3 in \cite{MR4270189}. 
In Section~\ref{sec: cutting_plane}, we study a sub-structure that arises from our proposed extended formulation, and we show that all valid inequalities for a particular \emph{Boolean Quadric Polytope} (BQP) \cite{MR1017216} associated with a complete bipartite graph, can be added as cutting-planes to our extended relaxation proposed in the previous section. 
In Section~\ref{sec: experiments}, we conduct an extensive numerical study over a series of randomly generated problems and illustrate that when combined with the cutting-planes from Section~\ref{sec: cutting_plane}, our extended relaxation is able to significantly close the optimality gap, compared to some other benchmark linear relaxations. 
We give concluding remarks in Section~\ref{sec: conclude}.

\textit{Notation.} 
Throughout this paper, vectors are written in bold, e.g., $\textbf{\textit{x}} = (x_1, \ldots, x_p)^\transp$, and matrices are written in capital letters.
The notation $M_i$ is used to denote the $i$-th row of a matrix $M \in \R^{m \times n}$, for $i \in [m]$.  For a given vector $\textbf{\textit{x}} \in \R^n$ and set $S \subseteq [n]$, we use the common notation $\textbf{\textit{x}}(S): = \sum_{i \in S} x_i$.  For a node $i \in [n]$ of the conflict graph $G$, the notation $\delta(i)$ refers to the set of nodes in $G$ adjacent to $i$, and for any $T \subseteq [n]$, we define $\delta(T): = \cup_{i \in T}\delta(i)$. 
The notation $\textbf{1}_n$ denotes the $n$-dimensional vector with all components 1, and the orthogonal projection of a set $X$ onto the space of $\textbf{\textit{x}}$-variables is given by $\proj_\textbf{\textit{x}} X.$  The set of all subsets of a given set $S$ is denoted as $2^S$.

\section{A Vertex cover-based extended relaxation}
\label{sec: ext_relax}
In this section, we introduce a new extended relaxation for \eqref{LPCC} that generalizes the ERLT procedure \cite{MR4270189} to the case of complementarity conditions forming an arbitrary conflict graph.  The extension relies on the concept of vertex covers.

\begin{definition}
A \emph{vertex cover} of a graph $G = ([n], E)$ is a subset $C$ of $[n]$ such that for every edge $\{i,j\} \in E$, at least one among $i$ and $j$ is in $C$. A \emph{minimum vertex cover} is a vertex cover with the smallest size. 
\end{definition}

The following lemma follows directly from the definition of a vertex cover.
\begin{lemma}
\label{lem: reform_comp}
Let $\textbf{\textit{y}} \in \R^n$ and let $C$ be a vertex cover of $G=([n], E)$. 
Then $\textbf{\textit{y}}$ satisfies the complementarity constraints of \eqref{LPCC} if and only if for any $i \in C$, either $y_i = 0$ or $y_j = 0$ for every $j \in \delta(i)$.
\end{lemma}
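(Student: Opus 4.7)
The plan is to prove both implications directly from the definition of a vertex cover, using only the logical content of the hypotheses; no auxiliary machinery is required.

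For the forward direction, I would assume that $\textbf{\textit{y}}$ satisfies $y_i y_j = 0$ for every $\{i,j\} \in E$ and fix an arbitrary $i \in C$. If $y_i = 0$ the conclusion holds trivially, so suppose $y_i \neq 0$. For any $j \in \delta(i)$ the edge $\{i,j\}$ belongs to $E$, so the complementarity condition gives $y_i y_j = 0$, and since $y_i \neq 0$ this forces $y_j = 0$. This yields the ``either $y_i=0$ or $y_j=0$ for all $j\in\delta(i)$'' disjunction for every $i \in C$.

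For the reverse direction, I would assume the disjunctive condition and pick an arbitrary edge $\{i,j\} \in E$. Because $C$ is a vertex cover, at least one endpoint, say $i$, lies in $C$ (the other case is symmetric). Applying the hypothesis at $i$, either $y_i = 0$, in which case $y_i y_j = 0$ immediately, or $y_k = 0$ for every $k \in \delta(i)$; since $\{i,j\}\in E$ implies $j \in \delta(i)$, we again obtain $y_i y_j = 0$. As the edge was arbitrary, the complementarity constraints hold.

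Honestly, there is no substantive obstacle here, the lemma is essentially a restatement of the covering property: each complementarity edge is ``witnessed'' by a covering endpoint, and the only careful point is reading the disjunction as inclusive and not confusing the roles of $i$ and $j$ when invoking symmetry in the second direction.
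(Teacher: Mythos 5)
Your proposal is correct and follows essentially the same argument as the paper: the forward direction uses the complementarity condition on each edge incident to a nonzero $y_i$ with $i \in C$, and the reverse direction picks an arbitrary edge, uses the covering property to find an endpoint in $C$, and applies the disjunction there. No gaps; this matches the paper's proof.
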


\begin{proof}
First, assume $y$ satisfies the complementarity constraints $y_i \cdot y_j = 0$ for all $\{i,j\} \in E$, and assume that $y_{i'} \neq 0$ for some $i' \in C$. 
Then we must have $y_j = 0$ for any $j \in \delta(i')$. 

On the other hand, assume that for any $i \in C$, either $y_i = 0$ or $y_j = 0$ for every $j \in \delta(i)$.  Take an arbitrary edge $\{i', j'\} \in E$.  Since $C$ is a vertex cover, either $i' \in C$ or $j' \in C$.  Suppose without loss of generality that $i' \in C$. Then by assumption we obtain that either $y_{i'} = 0$ or $y_j = 0$ for every $j \in \delta(i')$. Since $j' \in \delta(i')$, it follows that $y_{i'} \cdot y_{j'} = 0$.
\end{proof}

Our extended relaxation is based on the notion of a (feasible) cover partition.
\begin{definition}
We say that $\T \subseteq 2^{[n]}$ is a \emph{cover partition} of $G$ if $\cup_{T \in \T} T$ is a vertex cover of $G$, and $T_1 \cap T_2 = \emptyset$ for any $T_1, T_2 \in \T$.  A cover partition $\T$ of $G$ is a \emph{feasible cover partition} if for any $T \in \T$ and any $i \in T$, we have $\delta(i) = \delta(T)$. 
\end{definition}

For notational convenience, for any set $T \subseteq [n]$, define the sets
\begin{align*}
    H_T &:= \{(\textbf{\textit{x}}, \textbf{\textit{y}}) \in \R^{p+n} \mid y_i = 0 \ \forall i \in T\} \mbox { and}\\
    \bar H_T &:= \{(\textbf{\textit{x}}, \textbf{\textit{y}}) \in \R^{p+n} \mid y_j = 0 \ \forall  j \in \delta(T)\}.
\end{align*}
From Section~\ref{lem: reform_comp}, it follows that if $C$ is a vertex cover of the conflict graph $G$ of \eqref{LPCC}, then $P^\perp = P \cap_{i \in C} (H_{\{i\}} \cup \bar H_{\{i\}})$.  If $\T$ is a feasible cover partition of $G$, then because for any $T \in \T$ and $i,j \in T, \bar H_i = \bar H_j = \bar H_T$, we have that $\cap_{i \in C} (H_{\{i\}} \cup \bar H_{\{i\}}) = \cap_{T \in \T} (H_T \cup \bar H_T)$. Thus, we have the following chain of equalities:
\begin{equation}
\label{eq: reform_Pperp}
P^\perp = P \cap_{i \in C} (H_{\{i\}} \cup \bar H_{\{i\}})  = P \cap_{T \in \T} (H_T \cup \bar H_T) = \bigcap_{T \in \T} \big( (P \cap H_T) \cup (P \cap \bar H_T)  \big).
\end{equation}
Hence for a feasible cover partition $\T$, one natural convex relaxation for $P^\perp$ is
\begin{equation*}
\bigcap_{T \in \T} \conv\big( (P \cap H_T) \cup (P \cap \bar H_T)  \big).
\end{equation*}
In the remainder of the paper, we always assume that $\T$ is a feasible cover partition.  We first notes that this relaxation is a polyhedron.

\begin{observation}
\label{obs: easy_disjun_polyhedra}
For any $P \subseteq \R^{p} \times [0,1]^n$ and $T \subseteq [n], \conv\big( (P \cap H_T) \cup (P \cap \bar H_T)  \big)$ is a polyhedron.
\end{observation}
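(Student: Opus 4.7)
The plan is to reduce the claim to Balas' classical disjunctive programming result: the convex hull of a finite union of nonempty polyhedra that share a common recession cone is itself a polyhedron.

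First, I would observe that $P \cap H_T$ and $P \cap \bar H_T$ are both polyhedra, since each is obtained from the polyhedron $P$ by intersecting with finitely many hyperplanes of the form $\{y_i = 0\}$. The next key step is to show that these two polyhedra have the same recession cone. Because $P \subseteq \R^p \times [0,1]^n$, every recession direction $(d_x, d_y)$ of $P$ must satisfy $d_y = 0$, as the $y$-variables are bounded in $P$. Imposing the additional equalities $y_i = 0$ for $i \in T$ or $j \in \delta(T)$ is therefore vacuous on the recession cone, so both $P \cap H_T$ and $P \cap \bar H_T$ share the recession cone $C := \{(d_x, 0) \in \R^{p+n} : A d_x \leq 0\}$.

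With the recession cones matched, I would dispose of the trivial cases first: if either polyhedron is empty, the convex hull of the union equals the other (or is empty), which is a polyhedron. Otherwise, by Minkowski--Weyl we may write $P \cap H_T = Q_1 + C$ and $P \cap \bar H_T = Q_2 + C$ for polytopes $Q_1, Q_2$. A short direct computation, using that $C$ is a convex cone, shows
$$
\conv\bigl((Q_1 + C) \cup (Q_2 + C)\bigr) = \conv(Q_1 \cup Q_2) + C.
$$
The right-hand side is the Minkowski sum of a polytope and a polyhedral cone, hence a polyhedron, which gives the claim.

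The only real obstacle is verifying the recession-cone coincidence; once it is in place, the remainder is a routine application of the Minkowski--Weyl decomposition (or, equivalently, a direct invocation of Balas' disjunctive programming theorem). The boundedness of the $y$-variables in $P$ is essential here: without it, the convex hull of a union of two polyhedra need not even be closed, let alone polyhedral.
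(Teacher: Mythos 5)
Your proof is correct and follows essentially the same route as the paper: both arguments hinge on observing that the boundedness of the $y$-variables forces $P \cap H_T$ and $P \cap \bar H_T$ to share the recession cone $\{(\pmb{x}, \pmb{0}) \mid A\pmb{x} \leq \pmb{0}\}$, after which polyhedrality of the disjunctive hull follows from the classical result. The only difference is that you re-derive that classical result via the Minkowski--Weyl decomposition (and handle the empty cases explicitly), whereas the paper simply cites it.
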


\begin{proof}
Since $P \subseteq \R^{p} \times [0,1]^n$ and the hyperplanes defining $H_T$ and $\bar H_T$ only involve $y$ variables, the recession cone of the both polyhedra $P \cap H_T$ and $P \cap \bar H_T$ is $\{(\textbf{\textit{x}}, \pmb{0}) \mid A \textbf{\textit{x}} \leq \pmb{0}\}$.
It is well-known (see, e.g., \cite{Conforti:2014:IP:2765770}) that if the polyhedra share the same recession cone, the convex hull of the union of the polyhedra is polyhedral.
\end{proof}

A consequence of Section~\ref{obs: easy_disjun_polyhedra} is that (unlike in \cite{MR4270189}), we need not assume $P^\perp$ is compact. 
From Section~\ref{obs: easy_disjun_polyhedra} and Balas' classical results on disjunctive programming \cite{MR1663099, MR791175}, for each $T \subseteq [n]$, $\conv\big( (P \cap H_T) \cup (P \cap \bar H_T)  \big)$ has the following extended formulation:
\begin{align}
A \pmb{u_T} + B \pmb{v_T}  \leq \textbf{\textit{d}}  q_T \label{eq: balas 1} \\ 
A \pmb{\bar u_T} + B \pmb{\bar v_T}  \leq \textbf{\textit{d}}  \bar{q}_T \label{eq: balas 2}\\
\bar{v}_{T,i} = 0 \ \forall i \in T, \ v_{T,j} = 0 \ \forall j \in \delta(T) \label{eq: balas 3}\\
0 \leq \pmb{v_T} \leq q_T \cdot \textbf{1}_n,  \ 0 \leq \pmb{\bar v_T} \leq \bar q_T \cdot \textbf{1}_n \label{eq: balas 4}\\
\pmb{\bar u_T} + \pmb{u_T} = \textbf{\textit{x}}, \ \pmb{\bar v_T} + \pmb{v_T} = \textbf{\textit{y}} \label{eq: balas 5}\\
\bar{q}_T + q_T = 1, \ q_T, \bar{q}_T \geq 0, \label{eq: balas 6}
\end{align}
where $\pmb{u_T} \in \R^p$, $\pmb{\bar u_T} \in \R^p$, $\pmb{v_T} \in \R^n$, $\pmb{\bar v_T} \in \R^n$ are vectors of variables, and $q_T$ and $\bar{q}_T$ are scalar variables. 
For a given feasible cover partition $\T$, by enumerating the extended formulations for each $T \in \T$, we obtain a relaxation for $P^\perp$ in the extended space $\R^{p+n+2 |\T| + 2|\T| p + 2|\T| n}$.  We call this extended relaxation the \emph{vertex-cover-based extended relaxation} of $P^\perp$:
\begin{equation*}
\E_{\T}(P) := \big\{(\textbf{\textit{x}},\textbf{\textit{y}},\textbf{\textit{q}}, \bar{\textbf{\textit{q}}}, \textbf{\textit{u}},\bar{\textbf{\textit{u}}}, \textbf{\textit{v}}, \bar{\textbf{\textit{v}}}) \in \R^{p+n+2|\T| + 2|\T| p + 2|\T| n} \mid  \eqref{eq: balas 1}-\eqref{eq: balas 6} \ \forall T \in \T \big\}.
\end{equation*}

Here we should remark that $\E_{\T}(P)$ only depends on the linear constraints of the polyhedron $P$ and the feasible cover partition $\T$.
Due to the natural connection between $\E_{\T}(P)$ and the disjunctive formulation of $\conv\big( (P \cap H_T) \cup (P \cap \bar H_T)  \big)$ with $T \in \T$, we naturally have the following result.

\begin{theorem}
\label{theo: balas_disjun_nice}
Let $Q \subseteq \R^{p} \times [0,1]^n$ be a polyhedron such that $P^\perp \subseteq Q$, and let $\T$ be a feasible cover partition of the conflict graph $G$. Then for each $T \in \T$,
$$\conv(P^\perp) \subseteq \proj_{\textbf{\textit{x}},\textbf{\textit{y}}} \E_{\T}(Q) \subseteq \conv\big( (Q \cap H_T) \cup (Q \cap \bar H_T)  \big).$$
\end{theorem}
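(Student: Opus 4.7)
The plan is to prove the two inclusions $\conv(P^\perp) \subseteq \proj_{\textbf{\textit{x}},\textbf{\textit{y}}} \E_{\T}(Q)$ and $\proj_{\textbf{\textit{x}},\textbf{\textit{y}}} \E_{\T}(Q) \subseteq \conv\big((Q \cap H_T) \cup (Q \cap \bar H_T)\big)$ separately. A structural observation used throughout is that in the definition of $\E_{\T}(Q)$ the auxiliary variables $(\pmb{u_T}, \pmb{\bar u_T}, \pmb{v_T}, \pmb{\bar v_T}, q_T, \bar{q}_T)$ associated with different $T \in \T$ are disjoint, so both inclusions can be argued one $T$-block at a time, and projection commutes with the intersection over $\T$.

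For the right inclusion, I would fix a single $T \in \T$ and discard from $\E_{\T}(Q)$ all constraints \eqref{eq: balas 1}--\eqref{eq: balas 6} indexed by $T' \neq T$. What remains is exactly Balas' extended formulation of the two-term disjunction $(Q \cap H_T) \cup (Q \cap \bar H_T)$, whose $(\textbf{\textit{x}},\textbf{\textit{y}})$-projection coincides with $\conv\big((Q \cap H_T) \cup (Q \cap \bar H_T)\big)$ by Observation~\ref{obs: easy_disjun_polyhedra} together with the classical disjunctive programming results of Balas \cite{MR1663099, MR791175}. Since dropping constraints only enlarges the projection, this yields the desired upper bound.

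For the left inclusion, because $\proj_{\textbf{\textit{x}},\textbf{\textit{y}}} \E_{\T}(Q)$ is convex it suffices to show $P^\perp \subseteq \proj_{\textbf{\textit{x}},\textbf{\textit{y}}} \E_{\T}(Q)$. Given $(\textbf{\textit{x}}, \textbf{\textit{y}}) \in P^\perp \subseteq Q$, I would build an explicit lift block by block: for each $T \in \T$, I would first argue that $(\textbf{\textit{x}}, \textbf{\textit{y}}) \in H_T \cup \bar H_T$ by applying Lemma~\ref{lem: reform_comp} at any $i \in T$ and invoking $\delta(i) = \delta(T)$, and then set the $T$-block auxiliary variables to the trivial lift on the active side (the point-mass at $(\textbf{\textit{x}}, \textbf{\textit{y}})$ with $q$-weight $1$ on the active side and $0$ on the other, with the inactive-side auxiliaries set to $\pmb{0}$). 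A routine check confirms \eqref{eq: balas 1}--\eqref{eq: balas 6} for that $T$, using $(\textbf{\textit{x}}, \textbf{\textit{y}}) \in Q$ and $\textbf{\textit{y}} \in [0,1]^n$. Concatenating these independent lifts over all $T \in \T$ produces a feasible point of $\E_{\T}(Q)$ that projects to $(\textbf{\textit{x}}, \textbf{\textit{y}})$.

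The only conceptually nontrivial step is establishing the dichotomy $(\textbf{\textit{x}}, \textbf{\textit{y}}) \in H_T \cup \bar H_T$ for every $T$. Lemma~\ref{lem: reform_comp} gives only a vertex-wise statement, namely that for each individual $i \in T$ either $y_i = 0$ or all neighbors of $i$ vanish; lifting this to a single uniform statement across the whole set $T$ is exactly where the feasible-cover-partition hypothesis $\delta(i) = \delta(T)$ for all $i \in T$ is essential, since without it the half $\bar H_T$ would not be well-defined at the level of $T$. I expect this to be the main obstacle, though once the definitions are unpacked it reduces to a short case split on whether some $y_{i'} \neq 0$ for $i' \in T$ or not.
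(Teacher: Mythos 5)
Your proposal is correct and follows essentially the same route as the paper: the right inclusion is the trivial consequence of $\E_{\T}(Q)$ being a conjunction of Balas disjunctive blocks, and the left inclusion is proved by exhibiting, for each $(\textbf{\textit{x}},\textbf{\textit{y}}) \in P^\perp$, a block-by-block lift whose key ingredient is the dichotomy ``$\textbf{\textit{y}}(T)=0$ or $\textbf{\textit{y}}(\delta(T))=0$'' obtained from Lemma~\ref{lem: reform_comp} and the feasible-cover-partition condition $\delta(i)=\delta(T)$. The paper's lift sets $q_T = \textbf{\textit{y}}(T)/(\textbf{\textit{y}}(T)+\textbf{\textit{y}}(\delta(T)))$, which under that dichotomy collapses to exactly your point-mass assignment $q_T = \mathbbm{1}\{\textbf{\textit{y}}(T)>0\}$, so the two constructions coincide.
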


\begin{proof}
The set $\E_{\T}(Q)$ is constructed by enumerating Balas' disjunctive formulation of $\conv\big( (Q \cap H_T) \cup (Q \cap \bar H_T)  \big)$ for each $T \in \T$, so the second subset relation holds trivially. 
To show the first subset relation, since $\proj_{\textbf{\textit{x}},\textbf{\textit{y}}} \E_{\T}(Q)$ is a convex set, we need only need to show $P^\perp \subseteq \proj_{\textbf{\textit{x}},\textbf{\textit{y}}} \E_{\T}(Q)$, so we will show that for any $(\textbf{\textit{x}}, \textbf{\textit{y}}) \in P^\perp$, there exists $(\textbf{\textit{q}}, \bar{\textbf{\textit{q}}}, \textbf{\textit{u}}, \bar{\textbf{\textit{u}}}, \textbf{\textit{v}}, \bar{\textbf{\textit{v}}}) \in \R^{2|\T| + 2|\T| p + 2|\T|n}$ such that $(\textbf{\textit{x}}, \textbf{\textit{y}}, \textbf{\textit{q}}, \bar{\textbf{\textit{q}}}, \textbf{\textit{u}}, \bar{\textbf{\textit{u}}}, \textbf{\textit{v}}, \bar{\textbf{\textit{v}}}) \in \E_{\T}(Q)$. 
Given $(\textbf{\textit{x}}, \textbf{\textit{y}}) \in P^\perp$, for each $T \in \T$, let
\[
q_T := 
\begin{cases}
0 & \text {if } \textbf{\textit{y}}(T) +  \textbf{\textit{y}}(\delta(T)) = 0 \\
\frac{\textbf{\textit{y}}(T)}{\textbf{\textit{y}}(T) + \textbf{\textit{y}}(\delta(T)) } & \text{otherwise,}
\end{cases}
\qquad
\]
$\bar{q}_T := 1 - q_T, \textbf{\textit{u}}_{T} := q_T \cdot \textbf{\textit{x}}, \bar{\textbf{\textit{u}}}_{T} := \bar q_T  \cdot \textbf{\textit{x}}, \textbf{\textit{v}}_{T} := q_T  \cdot \textbf{\textit{y}}, \bar{\textbf{\textit{v}}}_{T} := \bar q_T  \cdot \textbf{\textit{y}}$.
Since $(\textbf{\textit{x}}, \textbf{\textit{y}}) \in P^\perp \subseteq Q$, by the definition of $(\textbf{\textit{u}}, \bar{\textbf{\textit{u}}}, \textbf{\textit{v}}, \bar{\textbf{\textit{v}}})$, the corresponding constraints \eqref{eq: balas 1} and \eqref{eq: balas 2} with respect to $Q$ are naturally satisfied. Now we check constraint \eqref{eq: balas 3}.  Since $\T$ is a feasible cover partition, for any $T \in \T$ and $i \in T$, $\delta(i) = \delta(T)$  
Also, since $\textbf{\textit{y}}$ satisfies the complementarity constraints, for any $i \in T$, either $y_i = 0$ or $\textbf{\textit{y}}(\delta(i)) = \textbf{\textit{y}}(\delta(T)) = 0$. 
Hence, either $\textbf{\textit{y}}(T) = 0$ or $\textbf{\textit{y}}(\delta(T)) = 0$.
In the first case, $q_T$ is defined to be 0, and equations of \eqref{eq: balas 3} hold; In the second case, $q_T = \mathbbm{1}\{\textbf{\textit{y}}(T) > 0\}$ and \eqref{eq: balas 3} holds. 
It is easy to verify that the remaining constraints \eqref{eq: balas 4}-\eqref{eq: balas 6} are also satisfied by $(\textbf{\textit{x}}, \textbf{\textit{y}}, \textbf{\textit{q}}, \bar{\textbf{\textit{q}}}, \textbf{\textit{u}}, \bar{\textbf{\textit{u}}}, \textbf{\textit{v}}, \bar{\textbf{\textit{v}}})$ for any $T \in \T$. 
\end{proof}

We define $\rel_{\T}(P) := \proj_{\textbf{\textit{x}},\textbf{\textit{y}}} \E_{\T}(P)$ to be the linear relaxation of $P^\perp$ in the original space of variables. 
Furthermore, for any $t \in \N$ and $t \geq 2$, we recursively define $\rel_{\T}^t(P) = \rel_{\T}(\rel_{\T}^{t-1}(P))$, where $\rel_{\T}^1(P): = \rel_{\T}(P)$. 
We obtain the following theorem.

\begin{theorem}
\label{theo: balas_disjun_nice_recur}
Let $\T$ be a feasible cover partition of $G$. For any $k \in [|\T|]$, and for any collection of $k$ distinct elements $T_1, \ldots, T_k$ of $\T$,
\begin{equation}
\label{eq: recur}
\conv(P^\perp) \subseteq \rel_{\T}^k(P) \subseteq \conv\big( \bigcap_{j=1}^k (P \cap H_{T_j}) \cup (P \cap \bar H_{T_j})  \big).
\end{equation}
\end{theorem}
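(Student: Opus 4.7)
The plan is induction on $k$. The base case $k=1$ is just \Cref{theo: balas_disjun_nice} applied with $Q = P$, which yields both inclusions. For the inductive step, fix $k$ distinct elements $T_1, \ldots, T_k$ of $\T$, set $Q := \rel_\T^{k-1}(P)$, and invoke the inductive hypothesis (applied to the $k-1$ elements $T_1, \ldots, T_{k-1}$), which gives $P^\perp \subseteq Q$ and $Q \subseteq \conv\big(\bigcap_{j=1}^{k-1}\big((P \cap H_{T_j}) \cup (P \cap \bar H_{T_j})\big)\big)$. Note that $Q$ is polyhedral and lies in $\R^p \times [0,1]^n$, so \Cref{theo: balas_disjun_nice} is applicable with $Q$ in the role of the ambient polyhedron.

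The first inclusion $\conv(P^\perp) \subseteq \rel_\T^k(P)$ is then immediate: applying \Cref{theo: balas_disjun_nice} to $Q$ (using $P^\perp \subseteq Q$) yields $\conv(P^\perp) \subseteq \proj_{\textbf{\textit{x}},\textbf{\textit{y}}}\E_\T(Q) = \rel_\T(Q) = \rel_\T^k(P)$.

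For the second inclusion, applying \Cref{theo: balas_disjun_nice} to $Q$ with the particular choice $T = T_k$ gives
$$
\rel_\T^k(P) \;\subseteq\; \conv\big((Q \cap H_{T_k}) \cup (Q \cap \bar H_{T_k})\big).
$$
The main step is the following ``set-to-zero'' identity: for any $A \subseteq \R^p \times [0,1]^n$ and any $S \subseteq [n]$,
$$
\conv(A) \cap \{(\textbf{\textit{x}}, \textbf{\textit{y}}) : y_i = 0 \ \forall i \in S\} \;=\; \conv\big(A \cap \{(\textbf{\textit{x}}, \textbf{\textit{y}}) : y_i = 0 \ \forall i \in S\}\big).
$$
The nontrivial containment uses nonnegativity: in any convex combination $\sum_t \lambda_t (\textbf{\textit{x}}^t, \textbf{\textit{y}}^t)$ whose $\textbf{\textit{y}}$-coordinates on $S$ sum to zero, every $(\textbf{\textit{x}}^t, \textbf{\textit{y}}^t)$ with $\lambda_t > 0$ must itself satisfy $y^t_i = 0$ for all $i \in S$. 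Since both $H_{T_k}$ and $\bar H_{T_k}$ have the ``zero out some $y$-coordinates'' form, and the inductive hypothesis places $Q$ inside $\conv(A)$ for $A := \bigcap_{j=1}^{k-1}\big((P \cap H_{T_j}) \cup (P \cap \bar H_{T_j})\big) \subseteq P$, applying the identity yields $Q \cap H_{T_k} \subseteq \conv(A \cap H_{T_k})$ and similarly for $\bar H_{T_k}$.

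Finally, because $A \subseteq P$, one has $A \cap H_{T_k} \subseteq A \cap \big((P \cap H_{T_k}) \cup (P \cap \bar H_{T_k})\big) = \bigcap_{j=1}^{k}\big((P \cap H_{T_j}) \cup (P \cap \bar H_{T_j})\big)$, and the analogous containment holds for $\bar H_{T_k}$. Taking convex hulls and chaining with the displayed inclusion above gives the desired $\rel_\T^k(P) \subseteq \conv\big(\bigcap_{j=1}^k\big((P \cap H_{T_j}) \cup (P \cap \bar H_{T_j})\big)\big)$. The step I expect to be the main obstacle is the set-to-zero identity: it fails for arbitrary affine slices (convex hulls generally enlarge under slicing), so the argument genuinely uses that $H_{T_k}$ and $\bar H_{T_k}$ zero out $y$-variables already known to be nonnegative.
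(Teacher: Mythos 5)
Your proof is correct and follows essentially the same route as the paper's: the same induction, the same choice $Q=\rel_{\T}^{k-1}(P)$, and the same key step of commuting the intersection with $H_{T_k}$ (resp.\ $\bar H_{T_k}$) past the convex hull of $\bigcap_{j=1}^{k-1}\big((P\cap H_{T_j})\cup(P\cap \bar H_{T_j})\big)$. The only cosmetic difference is that the paper justifies that step by observing these intersections are faces of $\conv(P\cap H)$, whereas you prove the same ``set-to-zero'' identity directly from nonnegativity of the $\textbf{\textit{y}}$-variables, which is exactly why those slices are faces.
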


\begin{proof}
The proof is by induction. 
When $k=1$, this result reduces to Section~\ref{theo: balas_disjun_nice} by picking $Q=P$. Assume now that the result holds for $k \leq \kappa-1$, where $\kappa \in [|\T|]$ and consider the case $k=\kappa$.
Let $Q = \rel_{\T}^{\kappa-1}(P)$, and $H = \cap_{j=1}^{\kappa-1} (H_{T_j} \cup \bar H_{T_j})$.
By the inductive hypothesis, we have:
\begin{equation}
\label{eq: tmp1}
\conv(P^\perp) \subseteq Q \subseteq \conv\big( \bigcap_{j=1}^{\kappa-1} (P \cap H_{T_j}) \cup (P \cap \bar H_{T_j})  \big) = \conv(P \cap H).
\end{equation}
Applying Section~\ref{theo: balas_disjun_nice} for $Q$ and $T_\kappa \in \T$, we have
\begin{equation}
\label{eq: tmp2}
\conv(P^\perp) \subseteq \rel_{\T}(Q) \subseteq \conv\big( (Q \cap H_{T_\kappa}) \cup (Q \cap \bar H_{T_\kappa})  \big).
\end{equation}
Combining \eqref{eq: tmp1} and \eqref{eq: tmp2}, we obtain
\begin{align*}
\conv(P^\perp) & \subseteq \rel_{\T}^{\kappa}(P) = \rel_{\T}(Q) \\
& \subseteq \conv\big( (\conv(P \cap H) \cap H_{T_\kappa}) \cup (\conv(P \cap H) \cap \bar H_{T_\kappa})  \big).
\end{align*}
Notice that both $H_{T_\kappa} \cap \conv(P \cap H)$ and $\bar H_{T_\kappa} \cap \conv(P \cap H)$ are faces of the set $\cap \conv(P \cap H)$, hence we have $\conv(P \cap H) \cap H_{T_\kappa}= \conv(P \cap H \cap H_{T_\kappa})$ and $\conv(P \cap H) \cap \bar H_{T_\kappa}= \conv(P \cap H \cap \bar H_{T_\kappa})$. 
Therefore:
\begin{align*}
\conv(P^\perp)  \subseteq \rel_{\T}^{\kappa}(P) & \subseteq \conv\big( \conv(P \cap H \cap H_{T_\kappa}) \cup \conv(P \cap H \cap \bar H_{T_\kappa})  \big) \\
& = \conv \big( (P \cap H \cap H_{T_\kappa})  \cup (P \cap H \cap \bar H_{T_\kappa}) \big) \\
& = \conv \big( \bigcap_{j=1}^{\kappa} (P \cap H_{T_j}) \cup (P \cap \bar H_{T_j})  \big).
\end{align*}
Hence \eqref{eq: recur} also holds for $k=\kappa$, and the proof is complete. 
\end{proof}
Note that Section~\ref{theo: balas_disjun_nice_recur} implies that $\rel_{\T}^{|\T|}(P) = \conv(P^\perp)$ for any feasible cover partition $\T$ of $G$, which naturally leads to the following definition.
\begin{definition}
\label{defn: rank}
Let $\T$ be a feasible cover partition for the conflict graph $G$ of \eqref{LPCC} whose feasible region is $P^\perp$.  The \emph{cover-partition rank} of $\T$ for $P^\perp$, denoted $r_{\T}(P^\perp)$, is the least number $r$ such that $\rel_{\T}^{r}(P) = \conv(P^\perp)$.
\end{definition}
An immediate corollary of Section~\ref{theo: balas_disjun_nice_recur} is that the cover-partition rank is upper bounded by the cardinality of the feasible cover partition for any polyhedron $P$.
\begin{corollary}
\label{cor: rank_ub}
$r_{\T}(P) \leq |\T|$.
\end{corollary}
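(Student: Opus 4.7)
The plan is to apply \Cref{theo: balas_disjun_nice_recur} directly with $k = |\T|$. Let $T_1, \ldots, T_{|\T|}$ be an arbitrary enumeration of the elements of $\T$ (these are distinct by definition of a partition). Then \Cref{theo: balas_disjun_nice_recur} yields
$$
\conv(P^\perp) \subseteq \rel_{\T}^{|\T|}(P) \subseteq \conv\Big( \bigcap_{j=1}^{|\T|} (P \cap H_{T_j}) \cup (P \cap \bar H_{T_j})  \Big).
$$

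The key step is to observe that the upper-bound set in this chain is nothing other than $\conv(P^\perp)$. Indeed, by the chain of equalities established in \eqref{eq: reform_Pperp}, we have
$$
P^\perp = \bigcap_{T \in \T} \big( (P \cap H_T) \cup (P \cap \bar H_T) \big) = \bigcap_{j=1}^{|\T|} \big( (P \cap H_{T_j}) \cup (P \cap \bar H_{T_j}) \big),
$$
so taking convex hulls gives that the rightmost set in the displayed chain equals $\conv(P^\perp)$. Thus $\conv(P^\perp) \subseteq \rel_{\T}^{|\T|}(P) \subseteq \conv(P^\perp)$, forcing equality.

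By \Cref{defn: rank}, the cover-partition rank $r_{\T}(P^\perp)$ is the least $r$ for which $\rel_{\T}^{r}(P) = \conv(P^\perp)$, and we have just exhibited such an $r$, namely $r = |\T|$. Hence $r_{\T}(P^\perp) \leq |\T|$. There is no real obstacle here; the entire content is the observation that running one disjunctive lifting per element of $\T$ exhausts all the complementarity disjunctions encoded by the feasible cover partition, and \Cref{theo: balas_disjun_nice_recur} already does the bookkeeping needed to make this rigorous.
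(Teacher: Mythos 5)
Your proof is correct and follows exactly the route the paper takes: apply \cref{theo: balas_disjun_nice_recur} with $k=|\T|$ and observe via \eqref{eq: reform_Pperp} that the outer bound collapses to $\conv(P^\perp)$, forcing $\rel_{\T}^{|\T|}(P)=\conv(P^\perp)$. The paper states this in a single remark immediately before the corollary; you have simply written out the same observation in full.
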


\subsection{Edge-based extended relaxation}
\label{sec: relaxation_edge_by_edge}


From a similar disjunctive perspective as that which motivated the chain of equations~\eqref{eq: reform_Pperp}, a natural relaxation of $\conv(P^\perp)$ contain be obtained using the following relations:

$$
\conv(P^\perp) = \conv \big(\bigcap_{ \{i,j\} \in E} (P \cap H_i) \cup (P \cap  H_j)  \big) \subseteq \bigcap_{\{i,j\} \in E} \conv\big( (P \cap H_i) \cup (P \cap H_j)  \big).
$$
Therefore, using disjunctive programming, the following linear system provides another extended relaxation of $P^\perp$:
\begin{equation}
\label{eq: systemM}
\begin{split}
A \pmb{u_e} + B \pmb{v_e}  \leq \textbf{\textit{d}}  q_e, & \quad \forall e \in E \\
A \pmb{\bar u_e} + B \pmb{\bar v_e}  \leq \textbf{\textit{d}}  \bar{q}_e, & \quad \forall e \in E \\
\bar{v}_{e,i} = 0, \ v_{e,j} = 0, & \quad \forall e=\{i,j\} \in E \\
0 \leq \pmb{v_e} \leq q_e \cdot \textbf{1}_n,  \ 0 \leq \pmb{\bar v_e} \leq \bar q_e \cdot \textbf{1}_n, & \quad \forall e \in E \\
\pmb{\bar u_e} + \pmb{u_e} = \textbf{\textit{x}}, \ \pmb{\bar v_e} + \pmb{v_e} = \textbf{\textit{y}}, & \quad \forall e \in E \\
\bar{q}_e + q_e = 1, \ q_e, \bar{q}_e \geq 0, & \quad \forall e \in E.
\end{split}
\end{equation}

Let $\M$ denote the set in $\R^{p+n+2|E|+2|E|p + 2|E|n}$ given by the linear system \eqref{eq: systemM}. 
It can be shown that $\M$ is never a stronger relaxation than the relaxation provided by $\E_\T(P)$ with respect to some feasible cover partition $\T$.

\begin{proposition}
\label{prop: comparison}
Let $\T^* = \{ \{i\} \mid \{i,j\} \in E \}$.
Then $ \rel_{\T^*}(P) \subseteq \proj_{\textbf{\textit{x}},\textbf{\textit{y}}} \M$.
\end{proposition}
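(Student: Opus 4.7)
The plan is to show the inclusion directly by lifting any witness in the vertex-cover-based extended formulation $\E_{\T^*}(P)$ to a witness for $\M$. The key observation is that for a singleton $T = \{i\} \in \T^*$, constraint \eqref{eq: balas 3} forces $v_{\{i\}, j'} = 0$ for \emph{every} $j' \in \delta(i)$, whereas the $\M$-system only requires $v_{e, j} = 0$ for the single neighbor $j$ associated with an edge $e = \{i,j\}$. In other words, the singleton-based disjunction ``$y_i=0$ or $y_{j'}=0$ for all $j' \in \delta(i)$'' is strictly stronger than the edge-based disjunction ``$y_i=0$ or $y_j=0$,'' so any Balas witness for the former automatically yields one for the latter.

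Concretely, I would start by taking $(\textbf{\textit{x}}, \textbf{\textit{y}}) \in \rel_{\T^*}(P)$, and use the definition $\rel_{\T^*}(P) = \proj_{\textbf{\textit{x}}, \textbf{\textit{y}}} \E_{\T^*}(P)$ to obtain, for each $\{i\} \in \T^*$, scalars $q_{\{i\}}, \bar q_{\{i\}}$ and vectors $\pmb{u_{\{i\}}}, \bar{\pmb{u}}_{\{i\}}, \pmb{v_{\{i\}}}, \bar{\pmb{v}}_{\{i\}}$ satisfying \eqref{eq: balas 1}-\eqref{eq: balas 6} with $T = \{i\}$. Note that every endpoint of every edge lies in the vertex cover underlying $\T^*$, so for every edge we have both singleton witnesses available.

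Next, for every edge $e = \{i, j\} \in E$ I would fix one endpoint, say $i$, and simply copy the singleton witness by setting $q_e := q_{\{i\}}$, $\bar q_e := \bar q_{\{i\}}$, $\pmb{u_e} := \pmb{u_{\{i\}}}$, $\bar{\pmb{u}}_e := \bar{\pmb{u}}_{\{i\}}$, $\pmb{v_e} := \pmb{v_{\{i\}}}$, $\bar{\pmb{v}}_e := \bar{\pmb{v}}_{\{i\}}$. This defines a candidate point in the extended space of $\M$ with the same $(\textbf{\textit{x}}, \textbf{\textit{y}})$-coordinates.

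Finally, I would verify the constraints of \eqref{eq: systemM} one block at a time. The inequalities on $(\pmb{u_e}, \pmb{v_e})$ and $(\bar{\pmb{u}}_e, \bar{\pmb{v}}_e)$, the convex-combination equations summing to $\textbf{\textit{x}}$ and $\textbf{\textit{y}}$, the bounds on $\pmb{v_e}, \bar{\pmb{v}}_e$, and $q_e + \bar q_e = 1$ are transferred verbatim from the corresponding $\E_{\T^*}(P)$-relations at $T = \{i\}$. The only step requiring a small argument is the complementarity-encoding line of \eqref{eq: systemM}: the identity $\bar v_{e, i} = \bar v_{\{i\}, i} = 0$ is immediate from \eqref{eq: balas 3} applied to $T = \{i\}$, and $v_{e, j} = v_{\{i\}, j} = 0$ follows because $j \in \delta(i) = \delta(\{i\})$, so this equation is also part of \eqref{eq: balas 3} for $T = \{i\}$. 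I do not expect any genuine obstacle: the nontrivial convexification has already been performed inside $\E_{\T^*}(P)$, and the proof is simply a matter of discarding the extra zero-constraints that $\E_{\T^*}(P)$ enforces at neighbors not involved in the edge $e$.
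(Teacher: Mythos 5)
Your proposal is correct and follows essentially the same route as the paper's proof: both take a witness for $\E_{\T^*}(P)$, copy the singleton block for one endpoint $i$ of each edge $e=\{i,j\}$ into the corresponding edge block of $\M$, and observe that the constraint $v_{\{i\},j'}=0$ for all $j'\in\delta(i)$ from \eqref{eq: balas 3} subsumes the single equation $v_{e,j}=0$ required by \eqref{eq: systemM}. No gaps.
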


\begin{proof}
Arbitrarily pick $(\pmb{x^*}, \pmb{y^*}) \in \rel_{\T^*}(P) = \proj_{\textbf{\textit{x}},\textbf{\textit{y}}} \E_{\T^*}(P)$.  There exist variables $(\pmb{q^*}, \pmb{\bar q^*}, \pmb{u^*}, \pmb{\bar u^*}, \pmb{v^*}, \pmb{\bar v^*})$ such that $(\pmb{x^*}, \pmb{y^*}, \pmb{q^*}, \pmb{\bar q^*}, \pmb{u^*}, \pmb{\bar u^*}, \pmb{v^*}, \pmb{\bar v^*})$ satisfies the following constraints:
\begin{equation}
\label{eq: another_disjunction}
\begin{split}
A \pmb{u^*_T} + B \pmb{v^*_T} \leq \textbf{\textit{d}}  q^*_T, & \quad \forall T \in \T^* \\
A \pmb{\bar u^*_T} + B \pmb{\bar v^*_T}  \leq \textbf{\textit{d}}  \bar{q}^*_T, & \quad \forall T \in \T^* \\
\bar{v}^*_{T,i} = 0, \ v^*_{T,j} = 0 \ \forall j \in \delta(i), & \quad \forall T = \{i\} \in \T^* \\
0 \leq \pmb{v^*_T} \leq q^*_T \cdot \textbf{1}_n,  \ 0 \leq \pmb{\bar v^*_T} \leq \bar q^*_T \cdot \textbf{1}_n, & \quad \forall T \in \T^* \\
\pmb{\bar u^*_T} + \pmb{u^*_T} = \textbf{\textit{x}}^*, \ \pmb{\bar v^*_T} + \pmb{v^*_T} = \textbf{\textit{y}}^*, & \quad \forall T \in \T^* \\
\bar{q}^*_T + q^*_T = 1, \ q^*_T, \bar{q}^*_T \geq 0, & \quad \forall T \in \T^*. 
\end{split}
\end{equation}

Now, for any $e=\{i,j\} \in E$, we construct:
$$
q^\circ_e := q^*_{\{i\}}, \ \bar q^\circ_e := \bar q^*_{\{i\}}, \ \pmb{u^\circ_e} := \pmb{u^*_{\{i\}}}, \ \pmb{\bar u^\circ_e} := \pmb{\bar u^*_{\{i\}}}, \ \pmb{v^\circ_e} := \pmb{v^*_{\{i\}}}, \ \pmb{\bar v^\circ_e} := \pmb{\bar v^*_{\{i\}}}.
$$
Given~\eqref{eq: another_disjunction} is simple to check that $(\pmb{x^*}, \pmb{y^*}, \pmb{q^\circ}, \pmb{\bar q^\circ}, \pmb{u^\circ}, \pmb{\bar u^\circ}, \pmb{v^\circ}, \pmb{\bar v^\circ})$ satisfies all the constraints \eqref{eq: systemM}. 
Thus $(\pmb{x^*}, \pmb{y^*}) \in \proj_{\textbf{\textit{x}},\textbf{\textit{y}}} \M$. 
\end{proof}

Later in Section~\ref{sec: experiments}, we will show empirically that $\E_{\T}(P)$ can provide a significantly stronger relaxation than $\mathcal M$.  
The strength of the relaxation depends on the choice of $\T$, so an important question is how to select the cover partition.  Intuition given by Section~\ref{cor: rank_ub} suggests that we should choose a feasible cover partition of small cardinality, as the cardinality upper bounds the cover-partition rank.  In this case, the simple choice of feasible cover partition $\{ \{i\} \mid \{i,j\} \in E\}$ of Proposition~\ref{prop: comparison} is not likely to lead to the strongest relaxation.   
Moreover, a feasible cover partition $\T$ with cardinality $t$ produces a linear relaxation $\E_{\T}(P)$ with $O(mt+nt+pt)$ linear constraints and $(2+2p+2n)t$ additional variables, so a smaller feasible cover partition will also be more favorable from a computational perspective. 
In the numerical experiments that we present later in Section~\ref{sec: experiments}, we will use the feasible cover partition given by some approximate minimum vertex cover, which can be easily accessed using the Python package \emph{NetworkX}.

\subsection{Equivalence of vertex cover-based extended relaxation and ERLT for 1-regular conflict graph}
\label{sec: 1-r}
In this section, we demonstrate that if the conflict graph of complementarity condition $G$ is 1-regular, then the vertex-cover-based extended relaxation $\E_\T(P)$ is the same as the relaxation from the Extended Reformulation Linearization Technique (ERLT) proposed by Nguyen et al. \cite{MR4270189}. 

We begin by briefly reviewing the ERLT procedure.
Let $Q = \{(\pmb{x}, \pmb{y}, \pmb{z}) \in \R^{p+2n} \mid \bar A \pmb{x} + \bar B \pmb{y} + \bar C \pmb{z} \leq \pmb{\bar d}\}$ be a given polytope in $\R^n_+$, where the complementarity constraints are $y_j \cdot z_j = 0$ for any $j \in [n]$. Given a matrix $ F \in \R^{ m \times  n}$ and a subset $K \subseteq [n]$, we denote by $F^K$ the sub-matrix obtained by removing from $F$ all columns with indices not in $K$, and denote by $F^{-K}$ the sub-matrix obtained by removing from $F$ all columns with indices in $K$. Using this notation, then applying the ERLT to $Q$ gives the linear description
\begin{equation}
    \label{eq: erlt_formulation}
    \begin{split}
        & \bar{A} \pmb{u_j} + \bar{B}^{-\{j\}} \pmb{v_j} + \bar{C}^{-\{j\}} \pmb{w_j} + \bar{B}^{\{j\}} y_j  \leq \bar{\textbf{\textit{d}}} q_j, \ \forall j \in N,\\
        & \bar{A} (\pmb{x}-\pmb{u_j}) + \bar{B}^{-\{j\}} (\pmb{y^{-\{j\}}} - \pmb{v_j}) + \bar{C}^{-\{j\}} (\pmb{z^{-\{j\}}} - \pmb{w_j}) + \bar{C}^{\{j\}} z_j  \leq \bar{\textbf{\textit{d}}} (1-q_j), \ \forall j \in N,
    \end{split}
\end{equation}
in the variables $\pmb{u_j} \in \R^{p}, \pmb{v_j} \in \R^{n-1}, \pmb{w_j} \in \R^{n-1}$, and $q_j \in \R$ for $j \in [n]$. 
The primary difference between the original Reformulation Linearization Technique (RLT) for binary mixed integer programs \cite{MR1061981} and ERLT is that in RLT we multiply the linear constraints of the problem with binary variables (and their complements) from the existing problem, while in ERLT, auxiliary variables are created, and the original linear constraints are multiplied with these auxiliary variables.  After multiplication, the reformulation and linearization steps are the same in RLT and ERLT.

Next, consider the vertex-cover-based relaxation for \eqref{LPCC} with a 1-regular conflict graph $G$.  When $G$ is 1-regular, the number of nodes of $G$ is even, and we assume without loss of generality that $G=([2n], E)$, where $E=\{\{i,i+n\} \mid i \in [n]\}$. 
Throughout this section, we define
$$
P^\perp_{1r} := \{(\textbf{\textit{x}}, \textbf{\textit{y}}) \in \R^{p+2n} \mid A \pmb{x} + D \pmb{y} \leq \pmb{d}, \ 0 \leq \pmb{y} \leq \textbf{1}_{2n}, \ y_i \cdot y_{n+i} = 0 \ \forall i \in [n]\}.
$$

Any minimum vertex cover $C$ of $G$ has the property that $|C \cap \{i, n+i\}| = 1$ for any $i \in [n]$, and by symmetry of variables $y_i$ and $y_{n+i}$ we may assume without loss of generality that the feasible cover partition is $\T=\{\{1\}, \ldots, \{n\}\}$.
Applying the vertex-cover-based extended relaxation procedure and renaming variables, we obtain the following extended relaxation for $P^\perp_{1r}$:
\begin{align*}
A \pmb{u_i} + D \pmb{\nu_i}  \leq \pmb{d}  q_i, \quad \forall i \in [n]  \\ 
A \pmb{\bar u_i} + D \pmb{\bar \nu_i}  \leq \pmb{d}  \bar{q}_i, \quad \forall i \in [n]  \\
\bar{\nu}_{i,i} = 0, \ \nu_{i,i+n} = 0, \quad \forall i \in [n] \\
0 \leq \pmb{\nu_i} \leq q_i \cdot \textbf{1}_{2n},  \ 0 \leq \pmb{\bar \nu_i} \leq \bar q_i \cdot \textbf{1}_{2n}, \quad \forall i \in [n]   \\
\pmb{\bar u_i} + \pmb{u_i} = \pmb{x}, \ \pmb{\bar \nu_i} + \pmb{\nu_i} = \pmb{y}, \quad \forall i \in [n]   \\
\bar{q}_i + q_i = 1, \ q_i, \bar{q}_i \geq 0, \quad \forall i \in [n].
\end{align*}
Here $\pmb{u_i}, \pmb{\bar u_i} \in \R^n, \pmb{\nu_i}, \pmb{\bar \nu_i} \in \R^{2n}$ for each $i \in [n]$. 
Now we denote $z_i: = y_{i+n}, v_{i,j} := \nu_{i,j}, \bar v_{i,j} := \bar \nu_{i,j}, w_{i,j} := \nu_{i,j+n}$, and $\bar w_{i,j} := \bar \nu_{i,j+n}$ for any $i,j \in [n]$, 
and let matrix $D := (B \mid C)$, where $B$ denotes the sub-matrix of $D$ given by the first $n$ columns, and $C$ denotes the sub-matrix of $G$ given by the remaining $n$ columns. Then
we have
\begin{align}
A \pmb{u_i} + B \pmb{v_i} + C \pmb{w_i}  \leq \pmb{d}  q_i, \quad \forall i \in [n]  \label{eq: inter1}\\ 
A \pmb{\bar u_i} + B \pmb{\bar v_i} + C \pmb{\bar w_i}  \leq \pmb{d}  \bar q_i, \quad \forall i \in [n]  \label{eq: inter2}\\ 
\bar{v}_{i,i} = 0, \ w_{i,i} = 0, \quad \forall i \in [n] \label{eq: inter3} \\
0 \leq \pmb{v_i}, \pmb{w_i} \leq q_i \cdot \textbf{1}_n,  \ 0 \leq \pmb{\bar v_i}, \pmb{\bar w_i} \leq \bar q_i \cdot \textbf{1}_n, \quad \forall i \in [n]  \label{eq: inter4} \\
\pmb{\bar u_i} + \pmb{u_i} = \pmb{x}, \ \pmb{\bar v_i} + \pmb{v_i} = \pmb{y_{[n]}}, \ \pmb{\bar w_i} + \pmb{w_i} = \pmb{z}, \quad \forall i \in [n]  \label{eq: inter5} \\
\bar{q}_i + q_i = 1, \ q_i, \bar{q}_i \geq 0, \quad \forall i \in [n]. \label{eq: inter6}
\end{align}
Here $\pmb{y_{[n]}} = (y_1, \ldots, y_n)^\transp$, and $\pmb{v_i}, \pmb{\bar v_i}, \pmb{w_i}, \pmb{\bar w_i} \in [0,1]^n$ for each $i \in [n]$. Lastly, for any $i \in [n]$, by eliminating variables $\bar{v}_{i,i}, w_{i,i}, \pmb{\bar u_i}, \pmb{\bar v_i}, \pmb{\bar w_i}$ and $\bar q_i$ using equations \eqref{eq: inter3},  \eqref{eq: inter5} and  \eqref{eq: inter6}, the above linear system will coincide exactly with the ERLT relaxation~\eqref{eq: erlt_formulation}, demonstrating their equivalence.

By this observation, the ERLT proposed in \cite{MR4270189} can be seen as a special case of our vertex-cover-based extended relaxation.  More specifically, by the construction of $\E_\T(P)$, we provide a disjunctive perspective for the ERLT procedure.  Once consequence of this different perspective is that Theorem~2 in \cite{MR4270189} can be deduced from the fact that the disjunctive formulation provides an extended formulation for the convex hull of multiple polyhedra with identical recession cones.



\section{Cutting-planes in extended space}
\label{sec: cutting_plane}

In this section, we study how to strengthen the extended relaxation $\E_\T(P)$ through cutting planes.
First, we strengthen the relaxation by imposing additional valid bilinear equations:
\begin{align}
 \pmb{u_T} = q_T \cdot \pmb{x}, \quad \pmb{\bar u_T} = \bar q_T \cdot \pmb{x}  & \quad \forall T \in \T \label{eq: bilinear1} \\
 \pmb{v_T} = q_T \cdot \pmb{y}, \quad \pmb{\bar v_T} = \bar q_T \cdot \pmb{y} & \quad  \forall T \in \T, \label{eq: bilinear2} 
\end{align}
and we define the set
\begin{equation}
\label{eq: exact_curvy}
\tilde \E_{\T}(P) := \big\{(\pmb{x}, \pmb{y}, \pmb{q}, \pmb{\bar q}, \pmb{u}, \pmb{\bar u}, \pmb{v}, \pmb{\bar v}) \mid   \eqref{eq: balas 1}-\eqref{eq: balas 6}, \eqref{eq: bilinear1},\eqref{eq: bilinear2} \ \forall T \in \T \big\}.
\end{equation}

We first show that these bilinear equations are indeed valid by demonstrating that $\tilde \E_{\T}(P)$ is an extended formulation for $P^\perp$. 

\begin{proposition}
\label{prop: exact_formulation_curvy}
$\proj_{\textbf{\textit{x}},\textbf{\textit{y}}} \tilde \E_{\T}(P) = P^\perp.$
\end{proposition}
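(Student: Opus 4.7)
The plan is to establish set equality by proving both inclusions. The construction from the proof of Theorem~\ref{theo: balas_disjun_nice} already gives me the forward inclusion; the bilinear equations \eqref{eq: bilinear1}-\eqref{eq: bilinear2} are automatically satisfied by that choice of lifting variables, since there I set $\textbf{\textit{u}}_T := q_T \cdot \textbf{\textit{x}}$, $\bar{\textbf{\textit{u}}}_T := \bar q_T \cdot \textbf{\textit{x}}$, $\textbf{\textit{v}}_T := q_T \cdot \textbf{\textit{y}}$, $\bar{\textbf{\textit{v}}}_T := \bar q_T \cdot \textbf{\textit{y}}$. So I would just invoke that construction verbatim and observe that \eqref{eq: bilinear1}-\eqref{eq: bilinear2} hold by definition, yielding $P^\perp \subseteq \proj_{\textbf{\textit{x}},\textbf{\textit{y}}} \tilde \E_{\T}(P)$.

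The substantive direction is $\proj_{\textbf{\textit{x}},\textbf{\textit{y}}} \tilde \E_{\T}(P) \subseteq P^\perp$. Given a point $(\textbf{\textit{x}},\textbf{\textit{y}},\textbf{\textit{q}},\bar{\textbf{\textit{q}}},\textbf{\textit{u}},\bar{\textbf{\textit{u}}},\textbf{\textit{v}},\bar{\textbf{\textit{v}}}) \in \tilde \E_{\T}(P)$, I first verify $(\textbf{\textit{x}},\textbf{\textit{y}}) \in P$. Adding the two constraints \eqref{eq: balas 1} and \eqref{eq: balas 2} for any single $T \in \T$ and using \eqref{eq: balas 5}-\eqref{eq: balas 6} gives $A\textbf{\textit{x}} + B\textbf{\textit{y}} \leq \textbf{\textit{d}}$; the bounds $0 \leq \textbf{\textit{y}} \leq \textbf{1}_n$ follow from \eqref{eq: balas 4}-\eqref{eq: balas 5}. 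It remains to check the complementarity constraints.

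For this, I would use the bilinear equations together with Lemma~\ref{lem: reform_comp}. Fix any $T \in \T$. From \eqref{eq: bilinear2}, $\bar{\textbf{\textit{v}}}_T = \bar q_T \cdot \textbf{\textit{y}}$ and $\textbf{\textit{v}}_T = q_T \cdot \textbf{\textit{y}}$, so \eqref{eq: balas 3} becomes $\bar q_T \cdot y_i = 0$ for every $i \in T$ and $q_T \cdot y_j = 0$ for every $j \in \delta(T)$. Since $q_T + \bar q_T = 1$, at least one of $q_T$, $\bar q_T$ is strictly positive; in either case, either $y_i = 0$ for all $i \in T$ or $y_j = 0$ for all $j \in \delta(T)$. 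Because $\T$ is a feasible cover partition, $\delta(i) = \delta(T)$ for each $i \in T$, so this says precisely that for every $i \in T$, either $y_i = 0$ or $y_j = 0$ for all $j \in \delta(i)$. Since $\cup_{T \in \T} T$ is a vertex cover of $G$, Lemma~\ref{lem: reform_comp} then implies that $\textbf{\textit{y}}$ satisfies all complementarity constraints, concluding $(\textbf{\textit{x}},\textbf{\textit{y}}) \in P^\perp$.

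I do not expect any real obstacle here: the forward inclusion is immediate from the earlier construction, and the reverse inclusion reduces, via the bilinear equations, to the equivalent reformulation of complementarity already packaged by Lemma~\ref{lem: reform_comp}. The only subtlety to be careful about is the case analysis on whether $q_T$ or $\bar q_T$ vanishes, which is handled uniformly by the observation that $q_T + \bar q_T = 1$.
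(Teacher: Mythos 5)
Your proposal is correct and follows essentially the same route as the paper: the reverse inclusion via the bilinear equations, \eqref{eq: balas 3}, and $q_T + \bar q_T = 1$ is the paper's argument almost verbatim (the paper cites \eqref{eq: reform_Pperp} where you cite Lemma~\ref{lem: reform_comp}, but these are the same fact). The only cosmetic difference is in the forward inclusion, where the paper re-lifts with the indicator choice $q_T = \mathbbm{1}\{\textbf{\textit{y}}(T)>0\}$ rather than reusing the fractional $q_T$ from Theorem~\ref{theo: balas_disjun_nice}; both liftings satisfy \eqref{eq: bilinear1}--\eqref{eq: bilinear2} by construction, so your shortcut is valid.
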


\begin{proof}
First, we show $\proj_{\textbf{\textit{x}},\textbf{\textit{y}}} \tilde \E_{\T}(P) \subseteq P^\perp.$ 
Pick $(\pmb{x^*}, \pmb{y^*}, \pmb{q^*}, \pmb{\bar q^*}, \pmb{u^*}, \pmb{\bar u^*}, \pmb{v^*}, \pmb{\bar v^*}) \in \tilde \E_{\T}(P)$. 
From constraints \eqref{eq: balas 1}, \eqref{eq: balas 2}, \eqref{eq: balas 4}, \eqref{eq: balas 5}, \eqref{eq: balas 6}, we know $(\pmb{x^*}, \pmb{y^*}) \in P$. 
Moreover, for each $T \in \T$, from \eqref{eq: balas 3}, \eqref{eq: bilinear1}, \eqref{eq: bilinear2}, we have $q^*_T \cdot y^*_j = 0$ for any $j \in \delta(T)$, and $\bar q^*_T \cdot y^*_i = 0$ for any $i \in T$. 
Since $\bar q^*_T + q^*_T = 1$ by \eqref{eq: balas 6}, we have either $\bar q^*_T > 0$ or $q^*_T > 0$. 
These yield that either $\pmb{y^*}(\delta(T)) = 0$ or $\pmb{y^*}(T) = 0$, which means $(\pmb{x^*}, \pmb{y^*}) \in P^\perp$ because of \eqref{eq: reform_Pperp}.

Now we arbitrarily pick $(\pmb{x'}, \pmb{y'}) \in P^\perp$. 
For each $T \in \T$, we define $q'_T = \mathbbm{1}\{\pmb{y'}(T) > 0\}$, $\bar q'_T = 1-q'_T$, and $\pmb{u'}, \pmb{\bar u'}, \pmb{v'}, \pmb{\bar v'}$ are defined by \eqref{eq: bilinear1} and \eqref{eq: bilinear2}, respectively. 
In order to show that $(\pmb{x'}, \pmb{y'}, \pmb{q'}, \pmb{\bar q'}, \pmb{u'}, \pmb{\bar u'}, \pmb{v'}, \pmb{\bar v'}) \in \tilde \E_{\T}(P)$, it suffices to show that \eqref{eq: balas 3} holds for each $T \in \T$, since all the remaining constraints are naturally satisfied from the construction of $\pmb{u'}, \pmb{\bar u'}, \pmb{v'}, \pmb{\bar v'}$. 
For each $T \in \T$ and $i \in T$, 
$$
\bar v'_{T,i} = \bar q'_T \cdot y'_i = (1-\mathbbm{1}\{\pmb{y'}(T) > 0\}) \cdot y'_i = \mathbbm{1}\{\pmb{y'}(T) = 0\} \cdot y'_i = 0.
$$
Next we verify that $v'_{T,j} = 0$ for any $j \in \delta(T)$.
Since $\pmb{y'}$ satisfies the complementarity constraints of $P^\perp$, by \eqref{eq: reform_Pperp} we know that either $\pmb{y'}(T) = 0$ or $\pmb{y'}(\delta(T)) = 0$. 
If $\pmb{y'}(T) = 0$, then $q'_T= \mathbbm{1}\{\pmb{y'}(T) > 0\} = 0$, which implies $v'_{T,j} =q'_T \cdot y'_j = 0$ for any $j \in \delta(T)$; 
If $\pmb{y'}(\delta(T)) = 0$, then $y'_j = 0$ for any $j \in \delta(T)$, so we have $v'_{T,j} = q'_T \cdot y'_j = 0$ for any $j \in \delta(T)$. Hence we have shown that $P^\perp \subseteq \proj_{\textbf{\textit{x}},\textbf{\textit{y}}} \tilde \E_{\T}(P)$, which completes the proof.
\end{proof}

From the disjunctive interpretation of the auxiliary variables in defining $\E_{\T}(P)$ and $\tilde \E_{\T}(P)$, we can further obtain the following proposition. Here for graph $G$, a \emph{clique} of $G$ is defined to be a subset of nodes such that every two distinct nodes in the clique are adjacent.

\begin{proposition}
\label{prop: simple_qcut}
Let $\T$ be a feasible cover partition of the conflict graph $G$, and let $C$ be a clique of $G$. 
If $T_1, \ldots, T_k$ are different node sets in $\T$ such that $T_i \cap C \neq \emptyset$ for any $i\in [k]$, then:
$$
\proj_{\textbf{\textit{x}}, \textbf{\textit{y}}} \left\{(\pmb{x}, \pmb{y}, \pmb{q}, \pmb{\bar q}, \pmb{u}, \pmb{\bar u}, \pmb{v}, \pmb{\bar v}) \in \tilde \E_{\T}(P) \mid  \sum_{i=1}^k q_{T_i} \leq 1, \sum_{i=1}^k \pmb{v_{T_i}} \leq \pmb{y} \right\} = P^\perp.
$$
\end{proposition}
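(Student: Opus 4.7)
The plan is to prove both inclusions of the displayed equality. The $\subseteq$ direction is immediate: the set on the left is a subset of $\tilde \E_{\T}(P)$, whose $(\pmb{x},\pmb{y})$-projection is exactly $P^\perp$ by Proposition~\ref{prop: exact_formulation_curvy}. So the real work is the reverse inclusion $P^\perp \subseteq \proj_{\textbf{\textit{x}},\textbf{\textit{y}}}(\cdot)$, which I would establish by exhibiting, for each $(\pmb{x'},\pmb{y'}) \in P^\perp$, a lifting that additionally satisfies the two new cuts.

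The natural candidate is exactly the canonical lifting used in the proof of Proposition~\ref{prop: exact_formulation_curvy}: set $q'_T := \mathbbm{1}\{\pmb{y'}(T) > 0\}$, $\bar q'_T := 1-q'_T$, and let $\pmb{u'_T}, \pmb{\bar u'_T}, \pmb{v'_T}, \pmb{\bar v'_T}$ be determined by the bilinear equations \eqref{eq: bilinear1}--\eqref{eq: bilinear2}. That proof already shows this tuple lies in $\tilde \E_{\T}(P)$, so only the two new cuts need verification. Using $\pmb{v'_{T_i}} = q'_{T_i} \pmb{y'}$ together with $\pmb{y'} \geq 0$, the second cut reduces to the first componentwise: $\sum_i v'_{T_i,\ell} = y'_\ell \sum_i q'_{T_i} \leq y'_\ell$. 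The whole matter therefore reduces to proving $\sum_{i=1}^k q'_{T_i} \leq 1$, that is, that at most one of the sums $\pmb{y'}(T_1), \ldots, \pmb{y'}(T_k)$ is strictly positive.

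To prove this, I would establish the combinatorial fact that every node of $T_i$ is adjacent in $G$ to every node of $T_j$ whenever $i \neq j$. Granted this, if both $\pmb{y'}(T_i)$ and $\pmb{y'}(T_j)$ were positive one could choose $t_i \in T_i$, $t_j \in T_j$ with $y'_{t_i} y'_{t_j} > 0$, contradicting the complementarity condition associated with the edge $\{t_i, t_j\}$.

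The adjacency claim is where all three hypotheses (clique, feasibility, disjoint partition) intertwine, and is the main obstacle. The argument I would use chains the feasibility of $\T$ twice. Pick $c_i \in T_i \cap C$ and $c_j \in T_j \cap C$; these exist by hypothesis, are distinct because $\T$ is a partition, and satisfy $\{c_i, c_j\} \in E$ because $C$ is a clique. Applying feasibility on the $T_i$ side, $\delta(t_i) = \delta(T_i) = \delta(c_i) \ni c_j$ for every $t_i \in T_i$, whence $\{t_i, c_j\} \in E$. Applying feasibility a second time, now on the $T_j$ side, $\delta(t_j) = \delta(T_j) = \delta(c_j) \ni t_i$ for every $t_j \in T_j$, which yields $\{t_i, t_j\} \in E$ and closes the argument.
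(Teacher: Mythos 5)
Your proposal is correct and follows essentially the same route as the paper's proof: the same canonical lifting with $q'_T = \mathbbm{1}\{\pmb{y'}(T) > 0\}$, reduction of both cuts to $\sum_i q'_{T_i} \leq 1$, and the same double application of the feasibility condition $\delta(i)=\delta(T)$ through clique representatives to produce an edge between the two nodes with positive $y'$-values, contradicting complementarity. The only cosmetic difference is that you state the full pairwise-adjacency claim between $T_i$ and $T_j$ up front, whereas the paper derives only the one edge needed for the contradiction.
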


\begin{proof}
From Section~\ref{prop: exact_formulation_curvy}, we only need to show the projection onto $(\textbf{\textit{x}}, \textbf{\textit{y}})$-space of the set defined in the statement of this proposition contains $P^\perp$.

Arbitrarily pick $(\pmb{x'}, \pmb{y'}) \in P^\perp$. 
For each $T \in \T$, we define $q'_T = \mathbbm{1}\{\pmb{y'}(T) > 0\}$, $\bar q'_T = 1-q'_T$, and $\pmb{u'}, \pmb{\bar u'}, \pmb{v'}, \pmb{\bar v'}$ are defined by \eqref{eq: bilinear1} and \eqref{eq: bilinear2}, respectively. 
It is obvious that $(\pmb{x'}, \pmb{y'}, \pmb{q'}, \pmb{\bar q'}, \pmb{u'}, \pmb{\bar u'}, \pmb{v'}, \pmb{\bar v'}) \in \tilde \E_{\T}(P)$.
Let $C$ and $T_1, \ldots, T_k \in \T$ be the node sets defined in the statement of this proposition. 

First of all, we show that $\sum_{i=1}^k q'_{T_i} \leq 1$.
If $\sum_{i=1}^k q'_{T_i} > 1$, from the definition of $q'_T = \mathbbm{1}\{\pmb{y'}(T) > 0\}$, we know there are $i_1 \neq i_2 \in [k]$ such that $\pmb{y'}(T_{i_1}) > 0, \pmb{y'}(T_{i_2}) > 0$. 
Say $y'_{j_1} > 0, y'_{j_2} > 0$ for $j_1 \in T_{i_1}, j_2 \in T_{i_2}$. 
Since $T_{i_1} \cap C \neq \emptyset, T_{i_2} \cap C \neq \emptyset$, and $C$ is a clique, we know there exists $j_3 \in T_{i_1}$ and $j_4 \in T_{i_2}$, such that $(j_3, j_4)$ is an edge in $G$. Hence $j_3 \in \delta(j_4), j_4 \in \delta(j_3)$. 
By definition of feasible cover partition, $\delta(T_{i_1}) = \delta(j_1) = \delta(j_3), \delta(T_{i_2}) = \delta(j_2) = \delta(j_4)$. 
Therefore, $j_3 \in \delta(j_2)$, so $j_2 \in \delta(j_3) = \delta(j_1)$, which means $(j_1, j_2)$ is also an edge of $G$. 
However, $y'_{j_1} > 0$, $y'_{j_2} > 0$, thus we get the contradiction using the complementarity constraints.

Lastly, $\sum_{i=1}^k \pmb{v'_{T_i}} \leq \pmb{y'}$ follows immediately from \eqref{eq: bilinear2} in the definition of $\tilde \E_{\T}(P)$. 
\end{proof}

In particular, Section~\ref{prop: simple_qcut} implies that all inequalities in Section~\ref{prop: simple_qcut} can be added to the formulation of $\E_{\T}(P)$ to obtain a stronger extended relaxation.

Even though Section~\ref{prop: exact_formulation_curvy} shows that $\tilde \E_{\T}(P)$ is a formulation of $P^\perp$ in an extended space, in general $\conv(\tilde \E_{\T}(P))$ is not expected to be polyhedral.
In order to obtain a polyhedral relaxation of $\conv(\tilde \E_{\T}(P))$ which is stronger than the previous extended relaxation $ \E_{\T}(P)$, we define $\S_{\T}$ as the set of points $(\pmb{y}, \pmb{q}, \pmb{\bar q}, \pmb{v}, \pmb{\bar v})$ in $\R^{n+2|\T|+2|\T|n}$ that satisfy the constraints defining $\tilde \E_{\T}(P)$ in \eqref{eq: exact_curvy} which do not involve variables $x, u$ and $\bar u$. 
Namely
\begin{equation}
\label{eq: defn_curvyS}
\S_{\T} :=  \big\{(\pmb{y}, \pmb{q}, \pmb{\bar q}, \pmb{v}, \pmb{\bar v}) \mid  \eqref{eq: balas 3}, \eqref{eq: balas 4}, \eqref{eq: balas 6}, \eqref{eq: bilinear2} \ \forall T \in \T \big\}.
\end{equation}
Here we ignore the equation $\pmb{\bar v_T} + \pmb{v_T} = \pmb{y}$ in \eqref{eq: balas 5} since it is induced by \eqref{eq: bilinear2} and $q_T + \bar q_T = 1$.
Note that:
\begin{equation}
\label{eq: bqp_relaxation}
\begin{split}
\conv(\tilde \E_{\T}(P)) \subseteq \big\{ (\pmb{x},\pmb{y},\pmb{q}, \pmb{\bar q}, \pmb{u},\pmb{\bar u}, \pmb{v}, \pmb{\bar v}) \mid \ &  \pmb{x} = \pmb{u_T} + \pmb{\bar u_T}, \eqref{eq: balas 1}, \eqref{eq: balas 2},\\
& (\pmb{y}, \pmb{q}, \pmb{\bar q}, \pmb{v}, \pmb{\bar v}) \in \conv(\S_{\T}) 
 \ \forall T \in \T \big\}.
\end{split}
\end{equation}

Obviously, the relaxation of $\conv(\tilde \E_{\T}(P))$ in \eqref{eq: bqp_relaxation} is tighter than $\E_{\T}(P)$. 
In the next result we show that it is indeed a polyhedral relaxation.

\begin{proposition}
\label{prop: binary_extreme}
The extreme points of $\S_{\T}$ are contained in $\{0,1\}^{n+2|\T|+2|\T|n}$. Hence the set $\conv(\S_{\T})$ is polyhedral.
\end{proposition}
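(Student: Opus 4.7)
The plan is to show that every extreme point of $\S_{\T}$ has all coordinates in $\{0,1\}$ and to observe that $\S_{\T}$ is bounded, so that $\conv(\S_{\T})$ is the convex hull of a finite set and hence a polytope. Boundedness follows from \eqref{eq: balas 4}, \eqref{eq: balas 6}, and \eqref{eq: bilinear2}: the first two give $q_T, \bar q_T \in [0,1]$, $v_{T,j} \in [0, q_T]$, and $\bar v_{T,j} \in [0, \bar q_T]$, while \eqref{eq: bilinear2} combined with $q_T + \bar q_T = 1$ gives $y_j = v_{T,j} + \bar v_{T,j} \in [0,1]$ for any $T \in \T$. I will establish binarity of the extreme points in two stages: first the $(\pmb{q}, \pmb{\bar q})$-coordinates, and then the $\pmb{y}$-coordinates, from which binarity of $(\pmb{v}, \pmb{\bar v})$ follows directly from \eqref{eq: bilinear2}.

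For the first stage, let $(\pmb{y}^*, \pmb{q}^*, \pmb{\bar q}^*, \pmb{v}^*, \pmb{\bar v}^*)$ be an extreme point and suppose some $q_{T_0}^* \in (0,1)$. Combining \eqref{eq: balas 3} with \eqref{eq: bilinear2} yields $\bar q_{T_0}^* y_i^* = 0$ for every $i \in T_0$ and $q_{T_0}^* y_j^* = 0$ for every $j \in \delta(T_0)$; since both $q_{T_0}^*$ and $\bar q_{T_0}^*$ are strictly positive, this forces $y_k^* = 0$ for every $k \in T_0 \cup \delta(T_0)$. I would then perturb only the $T_0$-block by setting $q_{T_0}^{\pm} := q_{T_0}^* \pm \epsilon$, $\bar q_{T_0}^{\pm} := \bar q_{T_0}^* \mp \epsilon$, $\pmb{v_{T_0}}^{\pm} := q_{T_0}^{\pm} \pmb{y}^*$, and $\pmb{\bar v_{T_0}}^{\pm} := \bar q_{T_0}^{\pm} \pmb{y}^*$, leaving all other coordinates unchanged. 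For $\epsilon < \min(q_{T_0}^*, \bar q_{T_0}^*)$ both perturbed points stay in $\S_{\T}$: the zero-equations in \eqref{eq: balas 3} are preserved because $\pmb{y}^*$ already vanishes on $T_0 \cup \delta(T_0)$, and the box constraints \eqref{eq: balas 4} hold because $\pmb{y}^* \in [0,1]^n$. The average of the two points returns the original, contradicting extremity, so $q_T^*, \bar q_T^* \in \{0,1\}$ for all $T \in \T$.

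For the second stage, binarity of $(\pmb{q}^*, \pmb{\bar q}^*)$ together with \eqref{eq: bilinear2} gives $\pmb{v_T}^* = \pmb{y}^*$ and $\pmb{\bar v_T}^* = \pmb{0}$ when $q_T^* = 1$, and the reverse when $q_T^* = 0$. Suppose $y_{j_0}^* \in (0,1)$ for some $j_0$. Then \eqref{eq: balas 3} forces $q_T^* = 1$ for every $T \in \T$ containing $j_0$ (otherwise $\bar v_{T,j_0}^* = y_{j_0}^* \neq 0$) and $q_T^* = 0$ for every $T$ with $j_0 \in \delta(T)$ (otherwise $v_{T,j_0}^* = y_{j_0}^* \neq 0$). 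Perturb the single coordinate $y_{j_0}^{\pm} := y_{j_0}^* \pm \epsilon$ and update $v_{T,j_0}^{\pm} := q_T^* y_{j_0}^{\pm}$ and $\bar v_{T,j_0}^{\pm} := \bar q_T^* y_{j_0}^{\pm}$ for every $T$, keeping all remaining coordinates fixed. The dichotomy just derived guarantees that the zero-entries demanded by \eqref{eq: balas 3} remain zero on both perturbed points, and the box constraints \eqref{eq: balas 4} hold for $\epsilon$ sufficiently small. Averaging contradicts extremity once more, so $y_{j_0}^* \in \{0,1\}$, and then $v_{T,j}^* = q_T^* y_j^*$ and $\bar v_{T,j}^* = \bar q_T^* y_j^*$ are binary as well.

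The step I expect to require the most care is the bookkeeping in the second stage, where one has to verify that the binary values of $\pmb{q}^*$ are arranged exactly compatibly with the zero-equations in \eqref{eq: balas 3} indexed by $j_0$, so that those equations survive the $y_{j_0}$-perturbation for every $T \in \T$ simultaneously. Once this is in hand, $\conv(\S_{\T})$ equals the convex hull of finitely many binary points, which is polyhedral.
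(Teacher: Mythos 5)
Your proof is correct and follows essentially the same strategy as the paper's: show that any fractional $\pmb{q}$- or $\pmb{y}$-coordinate permits a feasible two-sided perturbation within $\S_{\T}$, contradicting extremality, with the key observations (that $q_{T_0}^*\in(0,1)$ forces $\pmb{y}^*$ to vanish on $T_0\cup\delta(T_0)$, and that $y_{j_0}^*\in(0,1)$ pins down $q_T^*$ for all $T$ meeting $j_0$ or its neighborhood) matching the paper's verifications. The only cosmetic differences are that you treat $\pmb{q}$ before $\pmb{y}$, use $\pm\epsilon$ perturbations rather than rounding the fractional coordinate all the way to $0$ and $1$, and make the boundedness of $\S_{\T}$ explicit when concluding polyhedrality.
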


\begin{proof}
Let $\pmb{p^*} = (\pmb{y^*}, \pmb{q^*}, \pmb{\bar q^*}, \pmb{v^*}, \pmb{\bar v^*})$ be an extreme point of $\S_{\T}$. 
It suffices to show that $\pmb{y^*} \in \{0,1\}^n$ and $\pmb{q^*} \in \{0,1\}^{|\T|}$, since from equations $\bar q^*_T = 1- q^*_T, \pmb{v^*_T} = q^*_T \cdot \pmb{y^*}$ and $\pmb{\bar v^*_T} = \bar q^*_T \cdot \pmb{y^*}$, the integrality of the remaining variables follows.

First, we show $\pmb{y^*} \in \{0,1\}^n$. 
Assume for a contradiction that $0 < y^*_\ell < 1$ for some $\ell \in [n]$. Let $\pmb{p^0} = (\pmb{y^0}, \pmb{q^0}, \pmb{\bar q^0}, \pmb{v^0}, \pmb{\bar v^0})$ be the vector in $\R^{n+2|\T| +  2|\T| n}$ obtained from $\pmb{p^*}$ by setting $y^0_\ell = 0$ and $v^0_{T, \ell} = \bar v^0_{T, \ell} = 0$ for any $T \in \T$. Similarly, let $\pmb{p^1}$ be obtained from $\pmb{p^*}$ by setting $y^1_\ell = 1$ and $v^1_{T,\ell} = q^*_T, \bar v^1_{T, \ell} = \bar q^*_T$ for any $T \in \T$. 

Now we show that both vectors $\pmb{p^0}$ and $\pmb{p^1}$ are in $\S_{\T}$. From the construction of $\pmb{p^0}$ and $\pmb{p^1}$, it suffices to check that \eqref{eq: balas 3} are satisfied by both $\pmb{p^0}$ and $\pmb{p^1}$. Note that $\pmb{p^0} \leq \pmb{p^*}$ (component-wise), so the fact that $\pmb{p^*}$ satisfies \eqref{eq: balas 3} directly implies that $\pmb{p^0}$ satisfies \eqref{eq: balas 3}. 
For $\pmb{p^1}$, we want to check that: for any $T \in \T$, $\bar v^1_{T, i} = 0 \ \forall i \in T$, and $v^1_{T, j} = 0 \ \forall j \in \delta(T)$. 
Since $\pmb{p^*}$ satisfies \eqref{eq: balas 3}, and from the construction of $\pmb{p^1}$, it suffices to check that: $q^*_T = v^1_{T, \ell} = 0$ for any $T \in \T$ with $T \cap \delta(\ell) \neq \emptyset$, and $\bar q^*_T = \bar v^1_{T, \ell} = 0$ for any $T \in \T$ with $\ell \in T$. 
Since $\pmb{p^*}$ satisfies \eqref{eq: balas 3} and $\pmb{v^*_{T}} = q^*_T \cdot \pmb{y^*}, \pmb{\bar v^*_{T}} = \bar q^*_T \cdot \pmb{y^*}$ for any $T \in \T$, we obtain that $\bar v^*_{T, \ell} = \bar q^*_T \cdot y^*_\ell = 0$ for any $T \in \T$ with $\ell \in T$, and $v^*_{T, \ell} = q^*_T \cdot y^*_\ell = 0$ for any $T \in \T$ with $T \cap \delta(\ell) \neq \emptyset$. By assumption of $y^*_\ell > 0$, we get $\bar q^*_T = 0$ for any $T \in \T$ with $\ell \in T$, and $q^*_T = 0$ for any $T \in \T$ with $T \cap \delta(\ell) \neq \emptyset$. Therefore, we have shown that both $\pmb{p^0}, \pmb{p^1}$ are in $\S_{\T}$.

Next, we show that $\pmb{p^*}$ can be written as the convex combination of $\pmb{p^0}$ and $\pmb{p^1}$:
\begin{equation}
\label{eq: cc_check}
\pmb{p^*} = y^*_\ell \cdot \pmb{p^1} + (1-y^*_\ell) \cdot \pmb{p^0}.
\end{equation}
From the construction of $\pmb{p^0}$ and $\pmb{p^1}$, we only need to consider the components of $y_\ell$ and $v_{T, \ell}, \bar v_{T, \ell}$ for all $T \in \T$. The $y_\ell$ component of \eqref{eq: cc_check} is trivial to check. For any $T \in \T$, we have
\begin{align*}
& v^*_{T,\ell} = y^*_\ell \cdot q^*_T + (1-y^*_\ell) \cdot 0 = y^*_\ell \cdot v^1_{T, \ell} + (1-y^*_\ell) \cdot v^0_{T, \ell},\\
& \bar v^*_{T,\ell} = y^*_\ell \cdot \bar q^*_T + (1-y^*_\ell) \cdot 0 = y^*_\ell \cdot \bar v^1_{T, \ell} + (1-y^*_\ell) \cdot \bar v^0_{T, \ell},
\end{align*}
where the first equations of both lines are from the fact that $\pmb{p^*} \in \S_{\T}$, and the second equations are from the definition of $\pmb{p^0}$ and $\pmb{p^1}$. 
We have thereby shown that, if $\pmb{p^*}$ has fractional $y$ components, then $\pmb{p^*}$ can be written as the convex combination of two different points in $\S_{\T}$. 
This contradicts the fact that $\pmb{p^*}$ is an extreme point.

Next, we show $\pmb{q^*} \in \{0,1\}^{|\T|}$. 
Assume for a contradiction that $0 < q^*_{\tilde T} < 1$ for some $\tilde T \in \T$. 
Let $\pmb{p^2}=(\pmb{y^2}, \pmb{q^2}, \pmb{\bar q^2}, \pmb{v^2}, \pmb{\bar v^2})$ be the vector in $\R^{n+2|\T| + 2|\T| n}$ obtained from $\pmb{p^*}$ by setting $q^2_{\tilde T} = 0$, $\bar q^2_{\tilde T} = 1$, $\pmb{v^2_{\tilde T}} = 0$, $\pmb{\bar v^2_{\tilde T}} = \pmb{y^*}$. 
Similarly, let $\pmb{p^3}$ be obtained from $\pmb{p^*}$ by setting $q^3_{\tilde T} = 1$, $\bar q^3_{\tilde T} = 0$, $\pmb{v^3_{\tilde T}} = \pmb{y^*}$, $\pmb{\bar v^3_{\tilde T}} = 0$. 
Now, we show that $\pmb{p^2}, \pmb{p^3} \in \S_{\T}$. 
Here we only have to show that \eqref{eq: balas 3} are satisfied by both $\pmb{p^2}$ and $\pmb{p^3}$. 
For $\pmb{p^2}$, we need to show: $\bar v^2_{\tilde T, i} = 0 \ \forall i \in \tilde T$ and $v^2_{\tilde T, j} = 0$ for any $j \in \delta(T)$. 
By definition of $p^2$, we only need to show $y^*_i = 0 \ \forall i \in \tilde T$. 
By assumption of $q^*_{\tilde T} \in (0,1)$, we also have $\bar q^*_{\tilde T} \in (0,1)$. 
Since \eqref{eq: balas 3} and \eqref{eq: bilinear2} imply $\bar v^*_{\tilde T, i} = \bar q^*_{\tilde T} \cdot y^*_i = 0$, for any $i \in \tilde T$ we have $y^*_i = 0$ for any $i \in \tilde T$. Hence we have shown $\pmb{p^2} \in \S_{\T}$. Similarly, we can obtain $\pmb{p^3} \in \S_{\T}$. 
Lastly, we show that:
\begin{equation}
\label{eq: cc_check2}
\pmb{p^*} = q^*_{\tilde T} \cdot \pmb{p^3} + (1-q^*_{\tilde T}) \cdot \pmb{p^2}.
\end{equation}
From the definition of $\pmb{p^2}$ and $\pmb{p^3}$, we only need to consider the components of $q_{\tilde T}, \bar q_{\tilde T}, v_{\tilde T, j}, \bar v_{\tilde T, j}$ for any $j \in [n]$. The $q_{\tilde T}, \bar q_{\tilde T}$ components of \eqref{eq: cc_check2} are trivial to check. For any $j \in [n]$, we have
\begin{align*}
& v^*_{\tilde T, j} = q^*_{\tilde T} \cdot y^*_j + (1-q^*_{\tilde T}) \cdot 0 = q^*_{\tilde T} \cdot v^3_{\tilde T,j} + (1-q^*_{\tilde T}) \cdot v^2_{\tilde T,j}, \\
& \bar v^*_{\tilde T, j} = q^*_{\tilde T} \cdot 0 + \bar q^*_{\tilde T} \cdot y^*_j = q^*_{\tilde T} \cdot \bar v^3_{\tilde T,j} + (1-q^*_{\tilde T}) \cdot \bar v^2_{\tilde T,j},
\end{align*}
where the first equations of both lines are from the fact that $\pmb{p^*} \in \S_{\T}$, and the second equations are from the definition of $\pmb{p^2}$ and $\pmb{p^3}$. 
Therefore, $\pmb{p^*}$ can be written as the convex combination of two different points in $\S_{\T}$, which contradicts the fact that $\pmb{p^*}$ is an extreme point.
\end{proof}

According to Section~\ref{prop: binary_extreme}, the convex hull of $\S_{\T}$ in \eqref{eq: defn_curvyS} can be rewritten as:
\begin{equation}
\label{eq: cvx_rewrite_S}
\conv(\big\{ (\pmb{y}, \pmb{q}, \pmb{\bar q}, \pmb{v}, \pmb{\bar v}) \in \{0,1\}^{n+2|\T|+2|\T|n} \mid   \eqref{eq: balas 3}, \eqref{eq: balas 6}, \eqref{eq: bilinear2} \ \forall T \in \T \big\}).
\end{equation}
Here constraints \eqref{eq: balas 4} are redundant hence they have been removed. 

For the purpose of studying quadratic 0-1 programming problems, Padberg \cite{MR1017216} proposed
the \emph{boolean quadric polytope} (BQP), which is a polytope in a higher dimensional space that results from the linearization of the quadratic terms. 
In particular, the BQP with respect to the complete bipartite graph 
that can be bipartitioned into $2|\T|$ and $n$ nodes is defined as
$$
\QP_{2|\T|, n} := \conv(\{ (\pmb{y}, \pmb{q}, \pmb{\bar q}, \pmb{v}, \pmb{\bar v}) \in \{0,1\}^{n+2|\T|+2|\T|n} \mid  \eqref{eq: bilinear2} \ \forall T \in \T \}).
$$
In fact, this special case of BQP corresponding to a complete bipartite graph is also called \emph{bipartite BQP}, and its structural properties have been recently studied in \cite{sripratak2021bipartite}.
Using this definition, by \eqref{eq: cvx_rewrite_S}, we have the following relaxation for $\conv(\S_{\T}):$
\begin{equation}
\label{eq: bqp_strong_relaxation}
\begin{split}
 \conv(\S_{\T}) \subseteq 
& \conv(\{ (\pmb{y}, \pmb{q}, \pmb{\bar q}, \pmb{v}, \pmb{\bar v}) \in \{0,1\}^{n+2|\T|+2|\T|n} \mid  \eqref{eq: bilinear2} \ \forall T \in \T\} )  \\
& \cap \{(\pmb{y}, \pmb{q}, \pmb{\bar q}, \pmb{v}, \pmb{\bar v})  \mid  \eqref{eq: balas 3}, \eqref{eq: balas 6} \ \forall T \in \T, \pmb{\bar{v}_{T}} + \pmb{v_{T}} = \pmb{y}\} \\
= & \{(\pmb{y}, \pmb{q}, \pmb{\bar q}, \pmb{v}, \pmb{\bar v}) \in \QP_{2|\T|, n}  \mid  \eqref{eq: balas 3}, \eqref{eq: balas 6} \ \forall T \in \T, \pmb{\bar{v}_{T}} + \pmb{v_{T}} = \pmb{y}\}.
\end{split}
\end{equation}
Here we also include the equations $\pmb{\bar{v}_{T}} + \pmb{v_{T}} = \pmb{y}$ for all $T \in \T$, since they are obviously valid for $\S_{\T}$.
Interestingly, the next theorem shows that, the relaxation given in \eqref{eq: bqp_strong_relaxation} is in fact a formulation of $\conv(\S_{\T})$. 
It is worth mentioning that such result is non-trivial in general, since \eqref{eq: balas 6} and $\pmb{\bar{v}_{T}} + \pmb{v_{T}} = \pmb{y}$ are not facial constraints to $\QP_{2|\T|, n}$, and equations $\pmb{\bar{v}_{T}} + \pmb{v_{T}} = \pmb{y}$ are redundant in the definition \eqref{eq: defn_curvyS} of $\S_{\T}$, while they are necessary in order for the statement of the next theorem to hold.

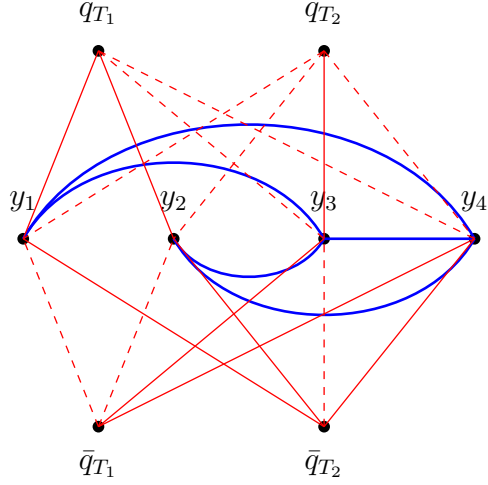
\begin{figure}[h]
\centering
\begin{tikzpicture}
\draw (0,-0.5) node[fill,circle,scale=0.4] {};     
\draw (0,0) node[] {$y_1$};
\draw (2,-0.5) node[fill,circle,scale=0.4] {};     
\draw (2,0) node[] {$y_2$};
\draw (4,-0.5) node[fill,circle,scale=0.4] {};     
\draw (4,0) node[] {$y_3$};
\draw (6,-0.5) node[fill,circle,scale=0.4] {};     
\draw (6,0) node[] {$y_4$};
\draw (1,2) node[fill,circle,scale=0.4] {};     
\draw (1,2.5) node[] {$q_{T_1}$};
\draw (4,2) node[fill,circle,scale=0.4] {};     
\draw (4,2.5) node[] {$q_{T_2}$};
\draw (1,-3) node[fill,circle,scale=0.4] {};     
\draw (1,-3.5) node[] {$\bar q_{T_1}$};
\draw (4,-3) node[fill,circle,scale=0.4] {};     
\draw (4,-3.5) node[] {$\bar q_{T_2}$};


\draw [blue, line width = 1](0,-0.5) to[out=60,in=120] (4,-0.5);
\draw [blue, line width = 1](0,-0.5) to[out=60,in=120] (6,-0.5);
\draw [blue, line width = 1](2,-0.5) to[out=-60,in=-120] (4,-0.5);
\draw [blue, line width = 1](2,-0.5) to[out=-60,in=-120] (6,-0.5);
\draw[blue, line width = 1](4,-0.5) -- (6,-0.5);

\draw[red, dashed, line width = 0.5,-] (1,-3) -- (0,-0.5);
\draw[red, dashed, line width = 0.5,-] (1,-3) -- (2,-0.5);
\draw[red, dashed, line width = 0.5,-] (4,-3) -- (4,-0.5);
\draw[red, line width = 0.5,-] (1,2) -- (0,-0.5);
\draw[red, line width = 0.5,-] (1,2) -- (2,-0.5);
\draw[red, line width = 0.5,-] (4,2) -- (4,-0.5);
\draw[red, dashed, line width = 0.5,-] (1,2) -- (4,-0.5);
\draw[red, dashed, line width = 0.5,-] (1,2) -- (6,-0.5);
\draw[red, dashed, line width = 0.5,-] (4,2) -- (0,-0.5);
\draw[red, dashed, line width = 0.5,-] (4,2) -- (2,-0.5);
\draw[red, dashed, line width = 0.5,-] (4,2) -- (6,-0.5);
\draw[red, line width = 0.5,-] (1,-3) -- (4,-0.5);
\draw[red, line width = 0.5,-] (1,-3) -- (6,-0.5);
\draw[red, line width = 0.5,-] (4,-3) -- (0,-0.5);
\draw[red, line width = 0.5,-] (4,-3) -- (2,-0.5);
\draw[red, line width = 0.5,-] (4,-3) -- (6,-0.5);
%
%
%
 \end{tikzpicture}
\caption{
In this figure, the original conflict graph $G$ is marked in blue, and the complete bipartite graph is marked in red. $\T= \{T_1, T_2\} := \left\{\{1, 2\}, \{3\} \right\}$ is a feasible cover partition of $G$, each red edge represents the variable $v_{T,i}$ or $\bar v_{T,i}$ for $T \in \T$ and $i \in [4]$, and the dashed red edges therein represent the variables that are set to 0 in \eqref{eq: balas 3}, which are $\bar v_{T_1, 1}, \bar v_{T_1, 2}, \bar v_{T_2, 3}, v_{T_1, 3}, v_{T_1, 4}, v_{T_2, 1}, v_{T_2, 2}$ and $v_{T_2, 4}$. 
}
\label{fig: G}
\end{figure}

\begin{theorem}
\label{th above}
$\{(\pmb{y}, \pmb{q}, \pmb{\bar q}, \pmb{v}, \pmb{\bar v}) \in \QP_{2|\T|, n}  \mid  \eqref{eq: balas 3}, \eqref{eq: balas 6} \ \forall T \in \T, \pmb{\bar{v}_{T}} + \pmb{v_{T}} = \pmb{y}\} = \conv(\S_{\T})$. 
\end{theorem}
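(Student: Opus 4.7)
The inclusion ``$\supseteq$'' is already established by \eqref{eq: bqp_strong_relaxation}, so my plan is to prove the reverse inclusion by a decomposition-and-swap argument. Given $\pmb{p} = (\pmb{y}, \pmb{q}, \pmb{\bar q}, \pmb{v}, \pmb{\bar v})$ in the left-hand side, I start from a representation $\pmb{p} = \sum_k \lambda_k \pmb{p^k}$ as a convex combination of 0-1 extreme points of $\QP_{2|\T|,n}$; each such $\pmb{p^k}$ satisfies $v^k_{T,i} = q^k_T y^k_i$ and $\bar v^k_{T,i} = \bar q^k_T y^k_i$ by definition of the bipartite BQP. The goal is to iteratively modify the decomposition, one $T \in \T$ at a time, until every $\pmb{p^k}$ lies in $\S_\T$.

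Fix $T \in \T$ and partition the index set into $K_{ab}^{(T)} = \{k : (q^k_T, \bar q^k_T) = (a,b)\}$ with total weights $\Lambda_{ab}^{(T)}$. Three structural identities drive the argument. First, $q_T + \bar q_T = 1$ from \eqref{eq: balas 6} forces $\Lambda_{00}^{(T)} = \Lambda_{11}^{(T)}$. Second, the equations $\bar v_{T,i} = 0$ for $i \in T$ and $v_{T,j} = 0$ for $j \in \delta(T)$ from \eqref{eq: balas 3}, together with nonnegativity of $\lambda_k \bar q^k_T y^k_i$ and $\lambda_k q^k_T y^k_j$, give $y^k|_T = 0$ whenever $\bar q^k_T = 1$ and $y^k|_{\delta(T)} = 0$ whenever $q^k_T = 1$. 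Third, subtracting $\sum_k \lambda_k y^k_i = y_i$ from $\sum_k \lambda_k(q^k_T + \bar q^k_T) y^k_i = y_i$ (the latter obtained from $\pmb{\bar v_T} + \pmb{v_T} = \pmb{y}$) yields the key identity $\sum_{K_{11}^{(T)}} \lambda_k y^k_i = \sum_{K_{00}^{(T)}} \lambda_k y^k_i$; combined with the second identity, the left-hand side vanishes for every $i \in T \cup \delta(T)$, hence so does the right, and therefore $y^k|_{T \cup \delta(T)} = 0$ for every $k \in K_{00}^{(T)}$ as well.

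The core step is a symmetric swap: for every $k \in K_{00}^{(T)} \cup K_{11}^{(T)}$ I replace $\lambda_k \pmb{p^k}$ by $\tfrac{\lambda_k}{2} \pmb{p^k_+} + \tfrac{\lambda_k}{2} \pmb{p^k_-}$, where $\pmb{p^k_+}$ and $\pmb{p^k_-}$ agree with $\pmb{p^k}$ outside the $T$-block and set the $T$-block to $(q_T, \bar q_T, \pmb{v_T}, \pmb{\bar v_T}) = (1, 0, \pmb{y^k}, \pmb{0})$ and $(0, 1, \pmb{0}, \pmb{y^k})$ respectively. Both replacements remain 0-1 extreme points of $\QP_{2|\T|,n}$, since the McCormick-style equations $v_{T,\cdot} = q_T \pmb{y^k}$ and $\bar v_{T,\cdot} = \bar q_T \pmb{y^k}$ hold by construction on the $T$-block and all other blocks are untouched. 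The total $\sum_k \lambda_k \pmb{p^k}$ is preserved coordinate by coordinate: the $(q_T, \bar q_T)$ balance uses $\Lambda_{00}^{(T)} = \Lambda_{11}^{(T)}$, and the $(\pmb{v_T}, \pmb{\bar v_T})$ balance uses the third identity (with both sides zero for $i \in T \cup \delta(T)$). Crucially, every new $\pmb{p^k_\pm}$ now satisfies the $T$-part of the defining constraints of $\S_\T$: the vanishing $y^k|_{\delta(T)} = 0$ forces $v_{T,j} = y^k_j = 0$ in $\pmb{p^k_+}$, the vanishing $y^k|_T = 0$ forces $\bar v_{T,i} = y^k_i = 0$ in $\pmb{p^k_-}$, and points in $K_{01}^{(T)} \cup K_{10}^{(T)}$ are already compliant by the second identity.

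Iterating over $T \in \T$ then completes the plan. Because each iteration only alters the $T$-block of each 0-1 point and in particular leaves every $y^k$ fixed, the three identities can be re-derived in unchanged form against the updated decomposition at every step---they depend only on the linear equations \eqref{eq: balas 3}, \eqref{eq: balas 6}, and $\pmb{\bar v_{T'}} + \pmb{v_{T'}} = \pmb{y}$ that continue to hold for $\pmb{p}$. Once every $T$ has been processed, each point in the final decomposition is a 0-1 element of $\S_\T$, giving $\pmb{p} \in \conv(\S_\T)$. The hardest ingredient is the third identity; without $y^k|_{T \cup \delta(T)} = 0$ on $K_{00}^{(T)}$, the swap would turn such a point into a $(1,0)$-type point whose $v_{T,j} = y^k_j$ could be nonzero for some $j \in \delta(T)$, violating \eqref{eq: balas 3}. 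This is exactly where the extra equations $\pmb{\bar v_T} + \pmb{v_T} = \pmb{y}$, flagged as ``necessary'' in the remark preceding the theorem, become indispensable.
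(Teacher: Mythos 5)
Your proof is correct and follows essentially the same route as the paper's: decompose the point into binary extreme points of $\QP_{2|\T|,n}$, observe that \eqref{eq: balas 3} is inherited by every summand, and then repair the summands with $q_T + \bar q_T \neq 1$ block by block using the identity $\sum_k \lambda_k (q^k_T + \bar q^k_T)\pmb{y^k} = \pmb{y}$ derived from $\pmb{v_T}+\pmb{\bar v_T}=\pmb{y}$. The only differences are cosmetic: you split each bad point symmetrically into a $(1,0)$- and a $(0,1)$-copy instead of flipping $\bar q_T$ as the paper does, and you spell out the verification that the repaired points still satisfy \eqref{eq: balas 3} (via $y^k_i = 0$ for $i \in T \cup \delta(T)$ on the $(0,0)$-points), a step the paper leaves implicit.
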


\begin{proof}
Because of \eqref{eq: bqp_strong_relaxation}, here we only have to show the other direction:
$  \{(\pmb{y}, \pmb{q}, \pmb{\bar q}, \pmb{v}, \pmb{\bar v}) \in \QP_{2|\T|, n}  \mid  \eqref{eq: balas 3}, \eqref{eq: balas 6} \ \forall T \in \T, \pmb{\bar{v}_{T}} + \pmb{v_{T}} = \pmb{y}\} \subseteq \conv(\S_{\T})$. 

Arbitrarily pick $p^* = (\pmb{y^*}, \pmb{q^*}, \pmb{\bar q^*}, \pmb{v^*}, \pmb{\bar v^*}) \in  \QP_{2|\T|, n}$ which also satisfies \eqref{eq: balas 3}, \eqref{eq: balas 6}, and $\pmb{\bar{v}_{T}} + \pmb{v_{T}} = \pmb{y}$ for every $T \in \T$. 
We want to show that $\pmb{p^*} \in \conv(\S_{\T})$.
Here $\pmb{p^*}$ can be written as the convex combination of some binary points in $\QP_{2|\T|, n}$: 
\begin{equation}
\label{eq: cc_bqp}
\pmb{p^*} = \sum_{i=1}^k \lambda_i \pmb{p^i}, \ \lambda_i > 0, \ \sum_{i=1}^k \lambda_i = 1, \ \pmb{p^i} \in \QP_{2|\T|, n} \cap \{0,1\}^{n+2|\T|+2|\T|n}.
\end{equation}
Our goal is to show that either all of those $\pmb{p^i}$ are contained in $\S_{\T}$, or there exist some other binary points in $\S_{\T}$ which contain $\pmb{p^*}$ in their convex hull.

First of all, since $\pmb{p^*}$ satisfies \eqref{eq: balas 3} for any $T \in \T$, and $\pmb{p^*}$ can be written as the convex combination in \eqref{eq: cc_bqp}, we know that $\pmb{p^i}$ satisfies \eqref{eq: balas 3} for all $i \in [k]$. If for any $i \in [k]$ and $T \in \T$, we also have $q^i_T + \bar q^i_T = 1$, then we complete the proof, since all of these $\pmb{p^i} \in \S_{\T}$, and $\pmb{p^*} = \sum_{i=1}^k \lambda_i \pmb{p^i} \in \conv(\S_{\T})$. 

Now, assume equation $q_{\tilde T} + \bar q_{\tilde T} = 1$ is not satisfied by all $\{\pmb{p^i}\}_{i=1}^k$, for some $\tilde T \in \T$. 
Since $q_{\tilde T} + \bar q_{\tilde T} = 1$ is satisfied by $\pmb{p^*}$, we know there must exist some point $\pmb{p^r}$ with $q^r_{\tilde T} + \bar q^r_{\tilde T} = 0$ and some other point $\pmb{p^s}$ with $q^s_{\tilde T} + \bar q^s_{\tilde T} = 2$. Namely, there exist $\{\ell_i\}_{i=1}^a, \{h_i\}_{i=1}^b \subseteq [k]$ such that $q^{\ell_i}_{\tilde T} + \bar q^{\ell_i}_{\tilde T} = 0$ for all $i \in [a]$, and $q^{h_i}_{\tilde T} + \bar q^{h_i}_{\tilde T} = 2$ for all $i \in [b]$. Next, we want to transform $\pmb{p^{\ell_1}}, \ldots, \pmb{p^{\ell_a}}, \pmb{p^{h_1}}, \ldots, \pmb{p^{h_b}}$ into some other binary points in $\QP_{2|\T|, n}$ with $q_{\tilde T} + \bar q_{\tilde T} = 1$. For all points in $\{\pmb{p^i}\}_{i=1}^k$ with index in $\{\ell_i\}_{i=1}^a \cup \{h_i\}_{i=1}^b$, we replace their $\bar q_{\tilde T}$ component by $1-\bar q_{\tilde T}$, and change those components $\pmb{\bar v_{\tilde T}}$ correspondingly to make $\pmb{p^i}$ remain feasible in $\QP_{2|\T|, n}$. 
We denote the obtained set of binary points by $\{\pmb{\tilde p^i}\}_{i=1}^k$. 
Clearly, after such transformation, those new points now all satisfy the equation $q_{\tilde T} + \bar q_{\tilde T} = 1$. Now we verify that $\pmb{p^*}$ can still be written as their convex combination, with the same convex combination coefficients: $\pmb{p^*} = \sum_{i=1}^k \lambda_i \pmb{\tilde p^i}$ . Note that the only components changed in $\sum_{i=1}^k \lambda_i \pmb{\tilde {p^i}}$ from $\sum_{i=1}^k \lambda_i \pmb{p^i}$ are $\bar q_{\tilde T}$ and $\pmb{\bar v_{\tilde T}}$. From the transformation above, the change occurred to component $\bar q_{\tilde T}$ of $\sum_{i=1}^k \lambda_i \pmb{\tilde p^i}$ from $\sum_{i=1}^k \lambda_i \pmb{p^i}$ is: $\sum_{i=1}^a \lambda_{\ell_i} - \sum_{i=1}^b \lambda_{h_i}$. By the fact that $q^*_{\tilde T} + \bar q^*_{\tilde T} = 1$ and $\pmb{p^*} = \sum_{i=1}^k \lambda_i \pmb{p^i}$, we have: 
$$
1 = 1\cdot (1-\sum_{i=1}^a \lambda_{\ell_i} - \sum_{i=1}^b \lambda_{h_i}) + 0 \cdot \sum_{i=1}^a \lambda_{\ell_i} + 2 \cdot \sum_{i=1}^b \lambda_{h_i},
$$
which gives us $\sum_{i=1}^a \lambda_{\ell_i} - \sum_{i=1}^b \lambda_{h_i} = 0$. So the component $\bar q_{\tilde T}$ of $\sum_{i=1}^k \lambda_i \pmb{\tilde p^i}$ remains unchanged from $\sum_{i=1}^k \lambda_i \pmb{p^i}$. Similarly, the change occurred to components $\pmb{\bar v_{\tilde T}}$ of $\sum_{i=1}^k \lambda_i \pmb{\tilde p^i}$ from $\sum_{i=1}^k \lambda_i \pmb{p^i}$ is: $\sum_{i=1}^a \lambda_{\ell_i} \pmb{y^{\ell_i}} - \sum_{i=1}^b \lambda_{h_i}\pmb{ y^{h_i}}$, where $\pmb{y^j}$ are the $\pmb{y}$ components of point $\pmb{p^j}$ for any $j \in [k]$. 
According to the fact that $\pmb{y^*} = \pmb{v^*_{\tilde T}} + \pmb{\bar v^*_{\tilde T}}$ and $\pmb{p^*} = \sum_{i=1}^k \lambda_i \pmb{p^i}$, here we have:
$$
\sum_{i=1}^a \lambda_{\ell_i} \pmb{y^{\ell_i}} + \sum_{i \neq \ell_j \text{ nor } h_j} \lambda_i \pmb{y^i} + \sum_{i=1}^b \lambda_{h_i} \pmb{y^{h_i}}  = \pmb{y^*} = \pmb{v^*_{\tilde T}} + \pmb{\bar v^*_{\tilde T}} = \sum_{i \neq \ell_j \text{ nor } h_j} \lambda_i \pmb{y^i} + 2 \sum_{i=1}^b \lambda_{h_i} \pmb{y^{h_i}}.
$$
This gives us $\sum_{i=1}^a \lambda_{\ell_i} \pmb{y^{\ell_i}} = \sum_{i=1}^b \lambda_{h_i} \pmb{y^{h_i}}$, which means that the components $\pmb{\bar v_{\tilde T}}$ of $\sum_{i=1}^k \lambda_i \pmb{\tilde p^i}$ remain unchanged from $\sum_{i=1}^k \lambda_i \pmb{p^i}$. 
Therefore, $\pmb{p^*} = \sum_{i=1}^k \lambda_i \pmb{p^i} = \sum_{i=1}^k \lambda_i \pmb{\tilde p^i}$. 
The above argument holds for any $T \in \T$ which has $q^i_T + \bar q^i_T \neq 1$ for some $i \in [k]$. 
Eventually, we end up getting $k$ binary points in $\S_{\T}$ that contain $\pmb{p^*}$ in their convex hull, hence $\pmb{p^*} \in \conv(\S_{\T})$.
\end{proof}

From Section~\ref{th above}, in order to fully utilize the cutting-planes provided by $\conv(\S)$, one only needs to consider valid inequalities of $\QP_{2|\T|, n}$ as well as some additional equations in \eqref{eq: bqp_strong_relaxation}.
However, it is challenging to enumerate the facets of $\QP$ even for a very small graph, see \cite{MR3508431}. 
The polyhedral structure of $\QP$ for a series-parallel or an acyclic graph were studied originally by Padberg \cite{MR1017216}.
More recently, Sripratak et al. \cite{sripratak2021bipartite} studied valid inequalities for $\QP$ over a complete bipartite graph, which is particularly interesting for our setting and they coined it as \emph{bipartite boolean quadric polytope}. 
In that paper, the authors proposed a few families of valid inequalities, e.g., \emph{odd-cycle inequalities}, \emph{rounded clique inequalities}, \emph{rounded cut inequalities}, etc., among which \emph{odd-cycle inequalities} are the only family of inequalities that can be facet-defining. 
Therefore, even though we have a complete characterization of $\conv(\S)$ in terms of $\QP_{2|\T|, n}$, facet-defining inequalities of $\QP_{2|\T|, n}$ are still generally hard to obtain.
Now, we will generate valid inequalities of $\conv(\S)$ from a different perspective.

It is simple to see that, by studying the sub-system of \eqref{LPCC} involving only $y$ variables, we are also able to derive valid inequalities for $P^\perp$. Namely, any valid inequality for the convex set 
\begin{equation}
\label{eq: stable_set}
\conv(\{\pmb{y} \in [0,1]^n \mid y_i \cdot y_j = 0 \ \forall \{i,j\} \in E\})
\end{equation}
can be utilized as a cutting-plane involving only $y$-variables to strengthen the LP relaxation $P$. 
In fact, as we will see in the next result, the convex set \eqref{eq: stable_set} is simply the \emph{stable set polytope} of the graph $G$, and any valid inequality for such stable set polytope is actually valid for the projection of $\conv(\S_{\T})$.
 
\begin{proposition}
\label{prop: stableset}
For any graph $G$ and any feasible cover partition $\T$ of $G$, 
$\proj_{\pmb{y}} \conv(\S_{\T}) \subseteq \conv(\{\pmb{y} \in [0,1]^n \mid y_i \cdot y_j = 0 \ \forall \{i,j\} \in E\}) = \conv(\{\pmb{y} \in \{0,1\}^n \mid y_i + y_j \leq 1 \ \forall \{i,j\} \in E\})$.
\end{proposition}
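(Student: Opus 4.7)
The plan is to prove the two parts in turn: first the inclusion on the left, then the equality on the right.

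For the inclusion $\proj_{\pmb{y}} \conv(\S_{\T}) \subseteq \conv(\{\pmb{y} \in [0,1]^n \mid y_i y_j = 0 \ \forall \{i,j\}\in E\})$, I would invoke Proposition~\ref{prop: binary_extreme}, which tells us that $\conv(\S_{\T})$ is the convex hull of its binary extreme points. Since projection commutes with convex hull, it is enough to check that whenever $(\pmb{y}, \pmb{q}, \pmb{\bar q}, \pmb{v}, \pmb{\bar v}) \in \S_{\T}$ is binary, the vector $\pmb{y} \in \{0,1\}^n$ satisfies $y_i y_j = 0$ for every edge. Fix an edge $\{i,j\} \in E$. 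Since $\cup_{T \in \T} T$ is a vertex cover, at least one endpoint lies in some $T \in \T$; say $i \in T$. The feasibility of the cover partition then gives $j \in \delta(i) = \delta(T)$. By \eqref{eq: balas 3} we have $\bar v_{T,i} = 0$ and $v_{T,j} = 0$, and combining with \eqref{eq: bilinear2} yields $\bar q_T \cdot y_i = 0$ and $q_T \cdot y_j = 0$. Since $q_T + \bar q_T = 1$ and both are binary, exactly one of them equals $1$; in either case $y_i \cdot y_j = 0$, as required.

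For the equality on the right, both inclusions are straightforward. The direction $\supseteq$ is immediate because each $\pmb{y} \in \{0,1\}^n$ satisfying $y_i + y_j \leq 1$ for every edge also satisfies $y_i y_j = 0$ and lies in $[0,1]^n$. For the direction $\subseteq$, consider an arbitrary $\pmb{y} \in [0,1]^n$ with $y_i y_j = 0$ for every edge. Let $I := \{i \in [n] \mid y_i > 0\}$ be the support of $\pmb{y}$; by the complementarity condition, $I$ is a stable set of $G$. Since $\pmb{y}$ restricted to $I$ lies in $[0,1]^I$, which is the convex hull of $\{0,1\}^I$, we can write $\pmb{y} = \sum_{S \subseteq I} \lambda_S \chi_S$ as a convex combination of characteristic vectors of subsets of $I$. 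Each such subset $S \subseteq I$ is itself a stable set, so $\chi_S \in \{0,1\}^n$ satisfies $\chi_S(i) + \chi_S(j) \leq 1$ for every edge. This exhibits $\pmb{y}$ as a convex combination of points from the right-hand set.

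Nothing in this argument is really an obstacle; the only subtlety is recognizing that the key structural work is already done by Proposition~\ref{prop: binary_extreme} together with the defining constraints \eqref{eq: balas 3} and \eqref{eq: bilinear2}, so the binary-extreme-point reduction turns the inclusion into a short edge-by-edge case analysis. Everything else is standard reasoning about supports on stable sets and decomposing a box into its binary vertices.
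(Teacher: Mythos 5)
Your proof is correct and follows essentially the same route as the paper: both reduce the left-hand inclusion to binary points of $\S_{\T}$ via Proposition~\ref{prop: binary_extreme} and then perform the same edge-by-edge case analysis using \eqref{eq: balas 3}, \eqref{eq: bilinear2}, and \eqref{eq: balas 6} to conclude that one endpoint variable must vanish. The only (immaterial) divergence is in the right-hand equality, where the paper shows a fractional point cannot be extreme by perturbing a single coordinate up and down, while you give an explicit convex decomposition of $\pmb{y}$ over its support; both arguments are standard and equally valid.
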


\begin{proof}
To show the second equation, it suffices to show that the set \eqref{eq: stable_set} has binary extreme points. 
Let $\pmb{y^*}$ be a fractional point of \eqref{eq: stable_set}, with $y^*_{i'} \in (0,1)$. 
Consider the two points $\pmb{y^0} = \pmb{y^*} - y^*_{i'} \cdot \pmb{e^{i'}}$ and $\pmb{y^1} = \pmb{y^*} + (1-y^*_{i'}) \cdot \pmb{e^{i'}}$, where $\pmb{e^{i'}}$ denotes the unit vector with $i'$th-component being 1. 
Since $y^*_{i'}>0$, by complementarity constraints, we know $y^*_{j} = 0$ for any $j \in \delta(i')$. Therefore, it is easy to verify that both $\pmb{y^0}$ and $\pmb{y^1}$ satisfy the complementarity constraints $y_i \cdot y_j = 0 \ \forall \{i,j\} \in E$. 
Since $\pmb{y^*}$ can be written as the convex combination of $\pmb{y^0}$ and $\pmb{y^1}$, we know that the fractional point $\pmb{y^*}$ cannot be an extreme point of set \eqref{eq: stable_set}.

To prove the first equation, by Section~\ref{prop: binary_extreme}, we only need to show that, for any binary $(\pmb{y^*}, \pmb{q^*}, \pmb{\bar q^*}, \pmb{v^*}, \pmb{\bar v^*}) \in \S_{\T}$ and any $(i',j') \in E$, we have $y^*_{i'} \cdot y^*_{j'} = 0$. Since $\T$ is a feasible cover partition, there exists $T^* \in \T$ such that $i' \in T^*$ or $j' \in T^*$. 
Without loss of generality assume $i' \in T^*$.
We then obtain $j' \in \delta(T^*)$. 
By equations \eqref{eq: balas 3}, \eqref{eq: balas 6}, \eqref{eq: bilinear2} with respect to index $T^*$, we have: $\bar q^*_{T^*} \cdot y^*_i = \bar v^*_{T^*, i} = 0 \ \forall i \in T^*$, $q^*_{T^*} \cdot y^*_j = v^*_{T^*, j} = 0 \ \forall j \in \delta(T^*)$, $q^*_{T^*} + \bar q^*_{T^*} = 1$. 
Since $q^*_{T^*}, \bar q^*_{T^*} \in \{0,1\}$ and $i' \in T^*, j' \in \delta(T^*)$, at least one of $y^*_{i'}$ and $y^*_{j'}$ is 0. 
Thus we complete the proof.
\end{proof}

From Section~\ref{prop: stableset}, we directly obtain the next corollary.
\begin{corollary}
\label{cor: stablecut}
Let $G$ be a graph and let $\T$ be a feasible cover partition of $G$. 
If $\pmb{\alpha}^\transp \pmb{y} \leq \beta$ is a valid inequality for the stable set polytope of $G$, then for any $T \in \T$, $\pmb{\alpha}^\transp \pmb{v_{T}} \leq \beta q_T$ and $\pmb{\alpha}^\transp \pmb{\bar v_{T}} \leq \beta \bar q_T$ are both valid inequalities for $\conv(\S_{\T})$.
\end{corollary}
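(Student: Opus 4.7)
The plan is to verify the inequality only at the extreme points of $\conv(\S_{\T})$. Because the defining constraints of $\S_{\T}$ force $q_T, \bar q_T \in [0,1]$, $\pmb{v_T}, \pmb{\bar v_T} \in [0,1]^n$, and (through $\pmb{v_T}+\pmb{\bar v_T}=\pmb{y}$, implied by \eqref{eq: bilinear2} and \eqref{eq: balas 6}) $\pmb{y} \in [0,1]^n$, the set $\S_{\T}$ is bounded and so $\conv(\S_{\T})$ is a polytope. A linear inequality is therefore valid on $\conv(\S_{\T})$ if and only if it is valid at every extreme point; by Proposition \ref{prop: binary_extreme} all extreme points lie in $\{0,1\}^{n+2|\T|+2|\T|n}$.

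Fix such a binary extreme point $\pmb{p^*}=(\pmb{y^*},\pmb{q^*},\pmb{\bar q^*},\pmb{v^*},\pmb{\bar v^*})$ and a set $T \in \T$. I would split on the two possible integer values of $q^*_T$:
\begin{itemize}
\item If $q^*_T = 0$, the bilinear equation $\pmb{v^*_T} = q^*_T \cdot \pmb{y^*}$ from \eqref{eq: bilinear2} yields $\pmb{v^*_T} = \pmb{0}$, so $\pmb{\alpha}^\transp \pmb{v^*_T} = 0 = \beta q^*_T$.
\item If $q^*_T = 1$, the same equation gives $\pmb{v^*_T} = \pmb{y^*}$. By Proposition \ref{prop: stableset} the $\pmb{y}$-projection of any point of $\conv(\S_{\T})$ lies in the stable set polytope of $G$, so $\pmb{y^*}$ satisfies $\pmb{\alpha}^\transp \pmb{y^*} \leq \beta$, giving $\pmb{\alpha}^\transp \pmb{v^*_T} = \pmb{\alpha}^\transp \pmb{y^*} \leq \beta = \beta q^*_T$.
\end{itemize}
This establishes $\pmb{\alpha}^\transp \pmb{v_T} \leq \beta q_T$ on $\conv(\S_{\T})$. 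The corresponding inequality $\pmb{\alpha}^\transp \pmb{\bar v_T} \leq \beta \bar q_T$ follows by the identical argument with $\bar q^*_T \in \{0,1\}$ and $\pmb{\bar v^*_T} = \bar q^*_T \cdot \pmb{y^*}$ in place of the corresponding barred quantities.

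There is no real obstacle here; the entire content of the corollary is packaged in the two prior propositions. The only point that requires a moment of care is that reducing to extreme points is legitimate, which relies on $\conv(\S_{\T})$ being a polytope, i.e., on boundedness of $\S_{\T}$ — a quick consequence of \eqref{eq: balas 4}, \eqref{eq: balas 6}, and \eqref{eq: bilinear2}.
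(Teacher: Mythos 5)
Your proof is correct, but it takes a more roundabout route than the paper's. The paper verifies the inequality at an \emph{arbitrary} point of $\S_{\T}$: by Proposition~\ref{prop: stableset} one has $\pmb{\alpha}^\transp \pmb{y^*} \leq \beta$, and multiplying both sides by the nonnegative scalar $q^*_T$ (resp.\ $\bar q^*_T$) and substituting $\pmb{v^*_T} = q^*_T\,\pmb{y^*}$ from \eqref{eq: bilinear2} gives the claim immediately; validity on $\conv(\S_{\T})$ then follows because the inequality is linear and holds on all of $\S_{\T}$. You instead reduce to the binary extreme points via Proposition~\ref{prop: binary_extreme}, which forces you to first argue that $\conv(\S_{\T})$ is a polytope (boundedness of $\S_{\T}$) and then to split on $q^*_T \in \{0,1\}$. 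Both arguments ultimately rest on the same two ingredients --- Proposition~\ref{prop: stableset} and the bilinear relation \eqref{eq: bilinear2} --- but the paper's observation that multiplying a valid inequality by a nonnegative scalar preserves it makes the case analysis and the appeal to Proposition~\ref{prop: binary_extreme} unnecessary. Your extra steps are all sound (in particular, extreme points of $\conv(\S_{\T})$ do lie in $\S_{\T}$ and hence are binary), so nothing is wrong; the detour just costs a dependency on boundedness that the direct argument avoids.
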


\begin{proof}
Arbitrarily pick a point $(\pmb{y^*}, \pmb{q^*}, \pmb{\bar q^*}, \pmb{v^*}, \pmb{\bar v^*})$ of $\S_{\T}$. 
From Section~\ref{prop: stableset}, we know $\pmb{\alpha}^\transp \pmb{y^*} \leq \beta$. 
By multiplying both sides of such inequality with $q^*_T$ and $\bar q^*_T$ for any $T \in \T$, from equation \eqref{eq: bilinear2}, we conclude the proof.
\end{proof}

\section{Numerical experiments}
\label{sec: experiments}

In this section, we conduct numerical experiments to compare the optimality gaps of various relaxations for some particular LPCCs.
We use Python 3.6.0 with CPLEX 12.9 as the LP and MIP solver. 

For each LPCC instance we created, its optimal value $v^*$ is obtained from formulating it as a MIP. Specifically, for any complementarity constraint $y_i \cdot y_j = 0$, we add additional binary variables $z_i, z_j$ and enforce $z_i + z_j \leq 1$, $y_i \leq z_i$, $y_j \leq z_j$. 
Then the \emph{optimality gap} of one particular relaxation procedure is $|(v_R-v^*)/v^*|$, where $v_R$ denotes the optimal value of such relaxation. 
Here we take the absolute value, because our generated instances are not necessarily all maximization problems, and $val^*$ can also be negative.

In this section, we use the following acronyms to denote the optimality gap for each individual relaxation method:
\begin{itemize}
\item[] \emph{LP:} Optimality gap of the LP relaxation of the LPCC, where we simply ignore all the complementarity constraints.
\item[] \emph{B\&C:} When formulating the LPCC as a MIP, it denotes the optimality gap of the internal branch-and-cut procedure of CPLEX, where we only allow the branching to occur at the root node and turn off a list of generic-purpose cuts: \emph{disjunctive cuts, Gomory cuts, multi-commodity flow cuts, zero-half cuts, MIR cuts, lift-and-project cuts}, and \emph{flow cover cuts}. All the other problem-specific cuts, e.g., \emph{clique cuts, cover cuts, GUB cuts} are kept.
\item[] \emph{ER-ee:} Optimality gap of the edge-by-edge-based extended relaxation $\mathcal M$ in \eqref{eq: systemM}. 
\item[] \emph{ER-vc:} Optimality gap of the vertex-cover-based extended relaxation $\E_\T(P)$.
\item[] \emph{ER-vc-cuts:} Optimality gap of the relaxation obtained by adding some of the cutting-planes we introduced before (see Section~\ref{sec: numerical_cuts} for details) to the vertex-cover-based extended relaxation $\E_\T(P)$.
\end{itemize}

In particular, by comparing columns \emph{ER-vc} and \emph{ER-vc-cuts}, one is able to see the strength of those added cuts. 
Here we should mention that, due to the lack of an efficient separation method for those cuts, to show how much they can further tighten the extended relaxation, when computing \emph{ER-vc-cuts} we treated those cuts as additional linear constraints and added them altogether into the formulation of $\E_\T(P)$.
Next, we introduce the specific cutting-planes from Section~\ref{sec: cutting_plane} that were added into extended relaxations to obtain \emph{ER-vc-cuts}.

\subsection{Cutting-planes for experiments}
\label{sec: numerical_cuts}
\subsubsection{Stable set cuts}
\label{sec: stable_set_cuts}
The first type of cuts comes from the stable set polytope of $G$. In specific, we consider the following well-known inequalities for the stable set polytope:
\begin{enumerate}
\item \emph{Clique constraints}: $\sum_{i \in C} y_i \leq 1$ where $C$ is a clique. 
\item \emph{Odd cycle constraints}: $\sum_{i \in D} y_i \leq (|D|-1)/2$ where $D$ induces a chordless odd cycle.
\item \emph{Odd anti-cycle constraints}: $\sum_{i \in \bar D} y_i \leq 2$ where $\bar D$ induces the complement of a chordless odd cycle.
\end{enumerate}
See \cite{MR368749} and \cite{lovasz2003semidefinite} for a detailed introduction to these families of cuts.
By Section~\ref{prop: stableset}, all these inequalities are valid for $\conv(\S_{\T})$, hence can be used as cutting-planes to strenghten $\E_{\T}(P)$. 
By Section~\ref{cor: stablecut}, the corresponding inequalities involving $\pmb{v} (\text{or }\pmb{\bar v})$ and $\pmb{q} (\text{or } \pmb{\bar q})$ can also be added as cuts. 

For a given graph $G$, we used the Python package \emph{NetworkX} to create the list of maximal cliques, odd cycles and odd anti-cycles. 

\subsubsection{Clique q-cuts}

This type of cuts comes from Section~\ref{prop: simple_qcut}: If clique $C$ is covered by a minimal subset $\{T_1, \ldots, T_k\}$ of $\T$, then adding $\sum_{i=1}^k q_{T_i} \leq 1$ into $\tilde{\E}_\T(P)$ remains to be an extended formulation of $P^\perp$, so it can be added to tighten the linear system of $\E_{\T}(P)$. 
Such inequality comes from a clique of the conflict graph and it only involves $q$-variables, hence we name it \emph{clique q-cut}. 
Similarly, inequality $\sum_{i=1}^k \pmb{v_{T_i}} \leq \pmb{y}$ can also be added into $\E_{\T}(P)$ to tighten the linear relaxation. 

\subsubsection{BQP cuts}
\label{sec: bqp_cuts}
The last type of cuts we added comes from $\QP_{2|\T|, n}$. As recently studied by Sripratak et al. \cite{sripratak2021bipartite}, a few families of valid inequalities have been proposed. However, the \emph{odd-cycle inequalities} 
are the only inequalities that can be facet-defining for $\QP_{2|\T|, n}$, while all the other inequalities therein, e.g., \emph{rounded clique inequalities, rounded cut inequalities} are actually valid for the linear relaxation given by the McCormick constraints.
Due to the lack of a complete understanding of the polyhedral structure of $\QP_{2|\T|, n}$, we only added \emph{odd-cycle inequalities} with a particular size into our formulation.
Specifically, we added the ones corresponding to a cycle $C$ with $|C| = 4$ and the edge subset $M$ with $|M| = 1$ in equation (24) in \cite{MR1017216}.
For any $T_1 \neq T_2 \in \T$ and $i \neq j \in [n]$, we obtain odd-cycle inequalities:
\begin{align*}
& -q_{T_2} - y_j + v_{T_1, j} + v_{T_2, i} + v_{T_2, j} - v_{T_1, i} \leq 0,\\
& - \bar q_{T_2} - y_j + \bar v_{T_1, j} + \bar v_{T_2, i} + \bar v_{T_2, j} - \bar v_{T_1, i} \leq 0,\\
& -q_{T_2} - y_j + \bar v_{T_1, j} + v_{T_2, i} + v_{T_2, j} - \bar v_{T_1, i} \leq 0,\\
& -\bar q_{T_2} - y_j +  v_{T_1, j} + \bar v_{T_2, i} + \bar v_{T_2, j} -  v_{T_1, i} \leq 0.
\end{align*}
Similarly, by picking a cycle $C$ with $|C| = 4$ and edge subset $M$ with $|M| = 3$, for any $T_1 \neq T_2 \in \T$ and $i \neq j \in [n]$, we obtain odd-cycle inequalities:
\begin{align*}
& q_{T_1} + y_j + v_{T_2, i} - v_{T_1, i} - v_{T_1, j} - v_{T_2, j} \leq 1,\\
& \bar q_{T_1} + y_j + \bar v_{T_2, i} - \bar v_{T_1, i} - \bar v_{T_1, j} - \bar v_{T_2, j} \leq 1,\\
& q_{T_1} + y_j + \bar v_{T_2, i} - v_{T_1, i} - v_{T_1, j} - \bar v_{T_2, j} \leq 1,\\
& \bar q_{T_1} + y_j + v_{T_2, i} - \bar v_{T_1, i} - \bar v_{T_1, j} - v_{T_2, j} \leq 1.
\end{align*}

We mention that the number of inequalities of the above form is $\mathcal O(|\T|^2 n^2)$ which is also $\mathcal O(n^4)$.
Without an efficient separation heuristic, adding these inequalities can be a great burden to the solver. 
Therefore, for this type of cuts, instead of adding all of them into the relaxation, we adopt the following iterative procedure: for each iteration and the optimal solution $p^*$ obtained at current step, we only add the BQP cuts that are able to separate $p^*$, and then solve the new relaxation and obtain a new optimal solution for the next iteration. We terminate the iterations when the optimal value $v'$ obtained in the current iteration is within a small distance $(<1\% \cdot v')$ from the optimal value in the last iteration, or when the iteration number exceeds a certain threshold (we pick 5 as the iteration upper bound for our later experiments).

\subsection{Instance sets and experimental results}
\label{sec: instance_sets}

In the following, we describe three applications from which we generated instances for our numerical experiments.
For the first two types of instances in Section~\ref{sec: tpesc} and Section~\ref{sec: cmkpc}, the corresponding conflict graphs were generated randomly according to the \emph{Erd\H{o}s-R\'{e}nyi model} with respect to some pre-determined density $\rho$, where each edge is included in the conflict graph with probability $\rho$, independently from every other edge. 
This can be realized using function \emph{erdos\_renyi\_graph} of Python package \emph{NetworkX}. 
For each of these conflict graphs, the function \emph{min\_weighted\_vertex\_cover} of \emph{NetworkX} will also produce an approximate minimum vertex cover, from which we produced the feasible cover partition to construct our extended formulation $\E_\T(P)$.
For the third type of instances in Section~\ref{sec: lpcc1regular}, we specifically considered the LPCC with 1-regular conflict graph, which has been studied extensively in the literature.

\subsubsection{Transportation problem with exclusionary side constraints}
\label{sec: tpesc}
The first type of instances we considered are instances of the \emph{Transportation Problem with Exclusionary Side Constraints} (TPESC), which was originally proposed by Cao \cite{MR1155839} and later also studied by Sun \cite{sun1998tabu}, and by Syarif and Gen \cite{syarif2003solving}. 
In this problem the goal is to transport goods from several sources $i \in S$ to destinations $j \in D$ at the lowest cost, while meeting all constraints on the supplies and demands.
Due to some specific reasons, in this problem it is not allowed to carry certain goods to the same destination.
For instance, hazardous materials may not be stored in the same warehouse.
Denote the source-destination pairs $L:= S \times D$, and the flow on arc $\ell \in L$ is denoted by $y_\ell$. If $\ell = \{i,j\}$ then $y_\ell$ is also denoted as $y_{i,j}$. Here we assume that $0 \leq y_\ell \leq 1$ for any $\ell \in L$.
Then TPESC can be modeled as a classical transportation problem with additional complementarity constraints:
\begin{align}
\label{TPESC}
\tag{TPESC}
\begin{split}
\text{minimize} \quad & \sum_{\ell \in L} c_\ell y_\ell  \\
\text{subject to} \quad & \sum_{j \in D} y_{i,j} = \alpha_i, \quad \forall i \in S \\
& \sum_{i \in S} y_{i,j} = \beta_j, \quad \forall j \in D \\
&  y_{i,j} \cdot y_{i',j} = 0, \quad \forall (i,i') \in E, j \in D \\
& 0 \leq \pmb{y} \leq \textbf{1}_{|L|}.
\end{split}
\end{align}
We remark that \eqref{TPESC} is a minimization problem instead of a maximization problem as \eqref{LPCC}.
Here $\pmb{\alpha} \in \R_+^{|S|}$ denotes the supplies, $\pmb{\beta} \in \R_+^{|D|}$ denotes the demands and $\pmb{c} \in \R_+^{|L|}$ denotes the transportation costs. 
We further assume that $\sum_{i \in S} \alpha_i = \sum_{j \in D} \beta_j$ and $\alpha_i \leq |D|$, $\beta_j \leq |S|$ for any $i \in S, j \in D$, since otherwise \eqref{TPESC} is infeasible. 
Let $E \subseteq S \times S$ denote the set of conflicting pairs of $S$ which cannot transport goods to the same destination in $D$. 
In other words, the conflict graph of \eqref{TPESC} is given by
$$
\big(L, \{\big( \{i,j\}, \{i', j\} \big) \in L \times L \mid (i, i') \in E, j \in D\} \big).
$$

For fixed source set $S$ and destination set $D$, we randomly generated our instances in the following way: $c_\ell$ was chosen uniformly at random from $[3,8]$ for any $\ell \in L$, $\alpha_i$ was chosen uniformly at random from $[1, \lfloor d/2 \rfloor]$ for any $i \in S$, and $\beta_j$ was chosen uniformly at random from $[1, \lfloor s/2 \rfloor]$ for any $j \in D$. 
In case supply exceeds demand, i.e., $\sum_{i \in S}\alpha_i > \sum_{j \in D}\beta_j$, then $\alpha_i$ values were evenly reduced until the sums are balanced. 
We analogously treated the $\beta_j$ values in case demand exceeds supply, i.e., 
$\sum_{i \in S}\alpha_i < \sum_{j \in D}\beta_j$.

Notice that for TPESC, the number of variables in the extended relaxation $\E_\T(P)$ is  $\mathcal O(|S|^2|D|^2)$.
For the ease of computation and illustration, we implemented our experiments using instances with relatively small sizes. 
In specific, we considered three problem sizes: $|S| = 10, |D| = 10$; $|S| = 30, |D| = 10$ and $|S| = 50, |D| = 5$. For each problem size, we also considered various conflict graph densities, and 10 randomly generated instances were solved with respect to each fixed $|S|, |D|$ and conflict graph density $\rho$.
For each relaxations mentioned before, the averaged numerical results are recorded in Section~\ref{smalltable0}. 

\begin{table}[H]
\centering
\caption{Optimality gap of different relaxations for \eqref{TPESC}. (\%)}
 \vspace{0.3cm}
\begin{tabular}{  p{2.4cm} | p{1cm} | p{1cm} | p{1.5cm} p{1.5cm} | p{2.1cm}  }
\hline
$(|S|, |D|, \rho)$ & LP   & B\&C & ER-ee & ER-vc &  ER-vc-cuts    \\ 
\midrule
(10, 10, 0.2) &  4.98   &  0.06  &  0.48  & 0.43  &  0.00\\
(10, 10, 0.4) &  11.35   &  0.10  & 3.03   & 2.14  & 0.00    \\
(10, 10, 0.6) &  12.45   &  0.85  & 3.60   & 1.72  & 0.00    \\
\midrule
(30, 10, 0.04) &  3.33   &  0.17  &  0.18  &  0.11  &  0.00    \\
(30, 10, 0.1) &  17.7   &  1.84  &  3.24  &  3.24  &  0.00    \\
(30, 10, 0.2) &  28.95    &  8.22  & 12.57  & 11.43 &  0.67   \\
\midrule
(50, 5, 0.04) & 8.40 &  0.31  &  0.31    & 0.27  & 0.02 \\
(50, 5, 0.08) &  15.68   &  2.16  & 3.58   & 3.58  & 0.00  \\
(50, 5, 0.12) &  20.99   &  5.85  & 8.01   & 8.01  & 2.86  \\
\bottomrule
 \end{tabular}
 \label{smalltable0}
\end{table}

The first thing that one might notice is that the optimality gaps of almost all relaxations increase when the conflict graph becomes denser.
Also, as shown by the smaller values in column \emph{ER-vc} than those in \emph{ER-ee}, we know that the vertex-cover-based formulation $\E_\T(P)$ is generally a tighter relaxation than the edge-by-edge formulation $\mathcal{M}$. This observation justifies our statement at the end of Section~\ref{sec: relaxation_edge_by_edge}. 
Moreover, even though in most of the testing results, the extended relaxation $\E_\T(P)$ itself alone does not seem to provide better optimality gap than the relaxation of \emph{B$\&$C}, but adding cutting-planes into the extended relaxation is able to provide a huge improvement for closing the optimality gap. This phenomenon will be further verified by the following experiments.

\subsubsection{Continuous multi-dimensional knapsack problem with conflicts}
\label{sec: cmkpc}

The second type of instances that we considered comes from the \emph{Continuous Multi-dimensional Knapsack Problem with Conflicts} (CMKPC):
 \begin{align}
\label{CMKPC}
\tag{CMKPC}
\begin{split}
\text{maximize} \quad & \pmb{c}^\transp \pmb{y}  \\
\text{subject to} \quad & M \pmb{y} \leq \pmb{b}, \\
&  y_{i} \cdot y_{j} = 0, \quad \forall \{i,j\} \in E \\
& 0 \leq \pmb{y} \leq \textbf{1}_{n}.
\end{split}
\end{align}
Here $M \in \Z_+^{m \times n}, \pmb{b} \in \Z_+^m$ and $\pmb{c} \in \Z_+^n$. 

The special case of CMKPC where there is only a single knapsack constraint and the conflict graph $([n], E)$ is given by the union of some disjoint cliques, was first investigated by Ibaraki et al. \cite{MR568622, ibaraki1978multiple}.
Later, de Farias et al. \cite{de2002facets} studied inequalities that define facets of the convex hull of its feasible set.
Later, Hifi and Michrafy \cite{hifi2007reduction}, Pferschy and Schauer \cite{pferschy2009knapsack}, Luiz et al. \cite{MR4330950} dealt with the knapsack problem with general conflict graph, but with binary variables.

Within the implementation of the experiments over CMKPC, we specifically considered three problem sizes: $n=20, m=5$; $n=60, m=4$ and $n=100, m=2$. For each problem size we randomly generated conflict graph with respect to various graph densities. Then for each problem size and graph density, we conducted numerical experiments over 10 randomly generated instances, where we adopted the same setup as in \cite{MR3773088}: the objective coefficients $c_j$ and row coefficients $M_{i,j}$ were generated uniformly at random in the interval $[10, 25]$, and the right hand side coefficients $b_i$ were determined by $b_i = 0.3 \cdot \textbf{1}_n^\transp M_{i}$, which comes from de Farias and Nemhauser \cite{MR1993458}. 
We report the averaged numerical results of those 10 experiments in Section~\ref{smalltable1}.

\begin{table}[ht]
\centering
\caption{Optimality gap of different relaxations for \eqref{CMKPC}. (\%)}
 \vspace{0.3cm}
\begin{tabular}{  p{2.4cm} | p{1.2cm} | p{1.2cm} | p{1.5cm} p{1.5cm} | p{2.1cm}  }
\hline
$(n,m, \rho)$ & LP   & B\&C & ER-ee & ER-vc &  ER-vc-cuts    \\ 
\midrule
(20, 5, 0.05) &  2.05   &  0.23  &  0.01  &  0.01  &      0.00   \\
(20, 5, 0.1) &  4.89   &  0.69  &  0.18  &  0.15  &      0.00    \\
(20, 5, 0.35) &  17.82   &  1.89  & 6.83   & 5.18   &  0.00    \\
(20, 5, 0.6) &  57.35   &  0.65  &  41.61  &  29.27  &  0.00   \\
\midrule
(60, 4, 0.05) & 4.37   &  0.23  & 0.26   & 0.23   &  0.00   \\
(60, 4, 0.1) & 10.37   &  0.38  & 0.82   & 0.79   &  0.02  \\
(60, 4, 0.15) & 25.90   &  5.49  & 11.96   & 11.96   &  0.76  \\
(60, 4, 0.2) & 41.90   & 11.68 & 26.51 & 25.60 & 1.78 \\
(60, 4, 0.35) &  102.17  &  15.72  & 79.01   &  77.78  &  3.73    \\
(60, 4, 0.6) & 220.51   &  21.21  &  187.36  &  161.09  &  3.90      \\
\midrule
(100, 2, 0.05) & 13.68   & 0.18  &  0.91  &  0.91 &  0.00   \\ 
(100, 2, 0.1) & 29.24   & 6.29  &  10.99  &  10.99 &  1.07   \\ 
(100, 2, 0.15) &  54.78  &  20.57  &  40.58  & 40.57  &   8.14 \\ 
(100, 2, 0.2) &  93.45   & 22.76  &  70.12   &  69.90  & 9.52 \\ 
\bottomrule
 \end{tabular}
 \label{smalltable1}
\end{table}

The numerical results in Section~\ref{smalltable1} further illustrate the great performance of our proposed extended relaxation when enhanced with cutting-planes. 
One can also see that, when the conflict graph is sparse, the extended relaxation from either the edge-by-edge formulation $\mathcal M$ or the vertex-cover-based formulation $\E_\T(P)$ can actually provide a decent approximation value for the corresponding LPCC. 
However, when the conflict graph becomes denser, those additional cutting-planes are able to dramatically further close the optimality gap.

\subsubsection{LPCC with 1-regular conflict graph}
\label{sec: lpcc1regular}

For the last type of instances, we considered the special case of LPCC with 1-regular conflict graph, which is the problem studied in Sect.~4.2 of \cite{MR4270189}:
\begin{align}
\label{1R}
\tag{1R}
\begin{split}
\text{maximize} \quad & \pmb{f_x}^\transp \pmb{x} + \pmb{f_y}^\transp \pmb{y} + \pmb{f_z}^\transp \pmb{z}  \\
\text{subject to} \quad & A \pmb{x} + B \pmb{y} + C \pmb{z} \leq \pmb{d},\\
&  y_{i} \cdot z_{i} = 0, \quad \forall i \in [n] \\
& 0 \leq \pmb{x} \leq \textbf{1}_{p}, \ 0 \leq \pmb{y}, \pmb{z} \leq \textbf{1}_n.
\end{split}
\end{align}
Here $p, n \in \N$, $\pmb{f_x} \in \R^p$, $\pmb{f_y}, \pmb{f_z} \in \R^n$, $A \in \R^{m \times p}$, $B, C \in \R^{m \times n}$, $\pmb{d} \in \R^m$. 

We generated 25 instances of \eqref{1R} for each set of problem size using the same setup as in \cite{MR4270189} (Sect.~4.2). In detail:
\begin{enumerate}
\item The entries of vectors $\pmb{f_x}, \pmb{f_y}, \pmb{f_z}$ are integers generated uniformly at random between -15 and 5;
\item The entries of $A$ and $B$ are integers generated uniformly at random between -20 and 30;
\item The entries of $C$ are obtained by generating integers $U_{i,j}$ uniformly at random between -10 and 10, and then by computing $C_{i,j} = -B_{i,j} + (\sign(B_{i,j}) + 0.5)^{-1} U_{i,j}$;
\item The entries of $\pmb{d}$ are obtained by generating reals $\theta_j$ uniformly at random between 0 and 1, and then by computing $d_i = \lfloor \theta (B_{i} + C_{i}) \textbf{1}_n \rfloor$. 
\end{enumerate}
Unlike in \cite{MR4270189}, where the authors only considered problem size $n = 6, p = 10, m = 10$, here we also considered another two sets of problem size: $n = 20, p = 10, m = 5$ and $n = 20, p = 20, m = 20$.

For LPCC with 1-regular conflict graph, when constructing the vertex-cover-based extended relaxation, we chose $\{p+1, \ldots, p+n\}$ as the vertex cover, which are the set of indices correspond to the $\pmb{y}$-components. 
It is not hard to observe that, in this case, the edge-by-edge extended relaxation \eqref{eq: systemM} coincides with such vertex-cover-based extended relaxation. Therefore, we do not report results of \emph{ER-ee}. Furthermore, when the conflict graph is 1-regular, it will not contain any odd cycle, odd anti-cycle, or clique with size larger than 2, so for \emph{ER-vc-cuts}, the cutting-planes we added into $\E_\T(P)$ only came from the \emph{BQP cuts} in Section~\ref{sec: bqp_cuts}.
We report the detailed results in Section~\ref{smalltable2}. 

\begin{table}[H]
\centering
\caption{Optimality gap of different relaxations for \eqref{1R}. (\%)}
 \vspace{0.3cm}
\begin{tabular}{  p{2.4cm} | p{1.5cm} | p{1.5cm} | p{1.5cm} | p{2.1cm}  }
\hline
$(n, p, m)$ & LP   & B\&C &  ER-vc &  ER-vc-cuts    \\ 
\midrule
(6, 10, 10) &  30.41   &  16.27  &  0.83  &       0.70    \\
\midrule
(20, 10, 5) &  13.64   &  3.36  &  0.29  &       0.28    \\
\midrule
(20, 20, 20) &  322.87    &  98.29   &  0.60   &    0.46      \\
\bottomrule
 \end{tabular}
 \label{smalltable2}
\end{table}

As we have argued in Section~\ref{sec: 1-r}, for LPCC with 1-regular conflict graph, our proposed extended relaxation coincides with the ERLT relaxation in \cite{MR4270189}.
For this class of instances, the above Section~\ref{smalltable2} demonstrates the great performance of such extended relaxation, which confirms that ERLT relaxation can be significantly stronger than those available in literature, as remarked in \cite{MR4270189}.
Notice that when the conflict graph is 1-regular, the density of the edges is simply $n/\binom{2n}{2}=1/(2n-1)$, which means that 1-regular conflict graphs are quite sparse. As we have also observed in Section \ref{sec: cmkpc}, our extended relaxation can generally provide much better results when the conflict graph is sparse. Therefore, in some sense, this justifies the great performance of ERLT for this type of problems. Moreover, the last column of \emph{ER-vc-cuts} shows that, despite the great performance of ERLT relaxation as shown in column \emph{ER-vc}, additional \emph{BQP cuts} can still further close the optimality gaps.

\section{Conclusion}
\label{sec: conclude}

In this paper, we studied linear relaxations in an extended space for linear programs with general complementarity constraints, which generalizes the recent convexification technique called ERLT developed by Nguyen et al. \cite{MR4270189}. 
We then proposed a few classes of cutting-planes to further strengthen the linear relaxation.
We presented numerical experiments that showcase the significant improvement obtained from the addition of those cuts. 
To facilitate the future practical uses, one interesting future direction is the study of efficient separation algorithms for those cuts in the extended space.
We believe that this paper provides a new perspective for developing branch-and-cut approaches for solving LPCCs with general conflict graph.

\bibliographystyle{plain}
\bibliography{biblio}

\begin{thebibliography}{10}

\bibitem{MR3530661}
Agostinho Agra, Mahdi Doostmohammadi, and Cid~C. de~Souza.
\newblock Valid inequalities for a single constrained 0-1 {MIP} set intersected
  with a conflict graph.
\newblock {\em Discrete Optim.}, 21:42--70, 2016.

\bibitem{MR1780979}
Alper Atamt\"{u}rk, George~L. Nemhauser, and Martin W.~P. Savelsbergh.
\newblock Conflict graphs in solving integer programming problems.
\newblock {\em European J. Oper. Res.}, 121(1):40--55, 2000.

\bibitem{MR1795056}
Alper Atamt\"{u}rk, George~L. Nemhauser, and Martin W.~P. Savelsbergh.
\newblock The mixed vertex packing problem.
\newblock {\em Math. Program.}, 89(1, Ser. A):35--53, 2000.

\bibitem{MR290793}
Egon Balas.
\newblock Intersection cuts---a new type of cutting planes for integer
  programming.
\newblock {\em Operations Res.}, 19:19--39, 1971.

\bibitem{MR791175}
Egon Balas.
\newblock Disjunctive programming and a hierarchy of relaxations for discrete
  optimization problems.
\newblock {\em SIAM J. Algebraic Discrete Methods}, 6(3):466--486, 1985.

\bibitem{MR1663099}
Egon Balas.
\newblock Disjunctive programming: properties of the convex hull of feasible
  points.
\newblock {\em Discrete Appl. Math.}, 89(1-3):3--44, 1998.

\bibitem{beale1970special}
Evelyn Martin~Lansdowne Beale and John~A Tomlin.
\newblock Special facilities in a general mathematical programming system for
  non-convex problems using ordered sets of variables.
\newblock {\em OR}, 69(447-454):99, 1970.

\bibitem{MR1155839}
B.~Cao.
\newblock Transportation problem with nonlinear side constraints: a branch and
  bound approach.
\newblock {\em Z. Oper. Res.}, 36(2):185--197, 1992.

\bibitem{Conforti:2014:IP:2765770}
M.~Conforti, G.~Cornuejols, and G.~Zambelli.
\newblock {\em Integer Programming}.
\newblock Springer Publishing Company, Incorporated, 2014.

\bibitem{de2001branch}
IR~de~Farias, Ellis~L Johnson, and George~L Nemhauser.
\newblock Branch-and-cut for combinatorial optimization problems without
  auxiliary binary variables.
\newblock {\em The Knowledge Engineering Review}, 16(1):25--39, 2001.

\bibitem{de2002facets}
IR~de~Farias, Ellis~L Johnson, and George~L Nemhauser.
\newblock Facets of the complementarity knapsack polytope.
\newblock {\em Mathematics of Operations Research}, 27(1):210--226, 2002.

\bibitem{de2014branch}
IR~de~Farias, Ernee Kozyreff, and Ming Zhao.
\newblock Branch-and-cut for complementarity-constrained optimization.
\newblock {\em Mathematical Programming Computation}, 6(4):365--403, 2014.

\bibitem{MR1993458}
IR~de~Farias and George~L Nemhauser.
\newblock A polyhedral study of the cardinality constrained knapsack problem.
\newblock {\em Math. Program.}, 96(3, Ser. A):439--467, 2003.

\bibitem{del2022new}
Alberto Del~Pia, Jeff Linderoth, and Haoran Zhu.
\newblock New classes of facets for complementarity knapsack problems.
\newblock {\em arXiv preprint arXiv:2203.02873}, 2022.

\bibitem{MR3508431}
Michel Deza and Mathieu~Dutour Sikiri\'{c}.
\newblock Enumeration of the facets of cut polytopes over some highly symmetric
  graphs.
\newblock {\em Int. Trans. Oper. Res.}, 23(5):853--860, 2016.

\bibitem{ferris1997engineering}
Michael~C Ferris and Jong-Shi Pang.
\newblock Engineering and economic applications of complementarity problems.
\newblock {\em Siam Review}, 39(4):669--713, 1997.

\bibitem{MR3773088}
Tobias Fischer and Marc~E. Pfetsch.
\newblock Branch-and-cut for linear programs with overlapping {SOS}1
  constraints.
\newblock {\em Math. Program. Comput.}, 10(1):33--68, 2018.

\bibitem{hifi2007reduction}
Mhand Hifi and Mustapha Michrafy.
\newblock Reduction strategies and exact algorithms for the disjunctively
  constrained knapsack problem.
\newblock {\em Computers \& operations research}, 34(9):2657--2673, 2007.

\bibitem{MR2403040}
Jing Hu, John~E. Mitchell, Jong-Shi Pang, Kristin~P. Bennett, and Gautam
  Kunapuli.
\newblock On the global solution of linear programs with linear complementarity
  constraints.
\newblock {\em SIAM J. Optim.}, 19(1):445--471, 2008.

\bibitem{MR2914070}
Jing Hu, John~E. Mitchell, Jong-Shi Pang, and Bin Yu.
\newblock On linear programs with linear complementarity constraints.
\newblock {\em J. Global Optim.}, 53(1):29--51, 2012.

\bibitem{ibaraki1973use}
Toshihide Ibaraki.
\newblock The use of cuts in complementary programming.
\newblock {\em Operations Research}, 21(1):353--359, 1973.

\bibitem{MR568622}
Toshihide Ibaraki.
\newblock Approximate algorithms for the multiple-choice continuous knapsack
  problems.
\newblock {\em J. Oper. Res. Soc. Japan}, 23(1):28--63, 1980.

\bibitem{ibaraki1978multiple}
Toshihide Ibaraki, Toshiharu Hasegawa, Katsumi Teranaka, and Jiro Iwase.
\newblock The multiple-choice knapsack problem.
\newblock {\em Journal of the Operations Research Society of Japan},
  21(1):59--95, 1978.

\bibitem{jeroslow1978cutting}
Robert~G Jeroslow.
\newblock Cutting-planes for complementarity constraints.
\newblock {\em SIAM Journal on Control and Optimization}, 16(1):56--62, 1978.

\bibitem{lovasz2003semidefinite}
L{\'a}szl{\'o} Lov{\'a}sz.
\newblock Semidefinite programs and combinatorial optimization.
\newblock In {\em Recent advances in algorithms and combinatorics}, pages
  137--194. Springer, 2003.

\bibitem{MR4330950}
Thiago~Alc\^{a}ntara Luiz, Haroldo~Gambini Santos, and Eduardo Uchoa.
\newblock Cover by disjoint cliques cuts for the knapsack problem with
  conflicting items.
\newblock {\em Oper. Res. Lett.}, 49(6):844--850, 2021.

\bibitem{MR4270189}
Trang~T. Nguyen, Jean-Philippe~P. Richard, and Mohit Tawarmalani.
\newblock Convexification techniques for linear complementarity constraints.
\newblock {\em J. Global Optim.}, 80(2):249--286, 2021.

\bibitem{MR1017216}
Manfred Padberg.
\newblock The {B}oolean quadric polytope: some characteristics, facets and
  relatives.
\newblock {\em Math. Programming}, 45(1, (Ser. B)):139--172, 1989.

\bibitem{MR368749}
Manfred~W. Padberg.
\newblock On the facial structure of set packing polyhedra.
\newblock {\em Math. Programming}, 5:199--215, 1973.

\bibitem{pferschy2009knapsack}
Ulrich Pferschy and Joachim Schauer.
\newblock The knapsack problem with conflict graphs.
\newblock {\em J. Graph Algorithms Appl.}, 13(2):233--249, 2009.

\bibitem{MR1397652}
H.~D. Sherali, R.~S. Krishnamurthy, and F.~A. Al-Khayyal.
\newblock Enhanced intersection cutting-plane approach for linear
  complementarity problems.
\newblock {\em J. Optim. Theory Appl.}, 90(1):183--201, 1996.

\bibitem{MR1061981}
Hanif~D. Sherali and Warren~P. Adams.
\newblock A hierarchy of relaxations between the continuous and convex hull
  representations for zero-one programming problems.
\newblock {\em SIAM J. Discrete Math.}, 3(3):411--430, 1990.

\bibitem{sripratak2021bipartite}
Piyashat Sripratak, Abraham~P Punnen, and Tamon Stephen.
\newblock The bipartite boolean quadric polytope.
\newblock {\em Discrete Optimization}, page 100657, 2021.

\bibitem{sun1998tabu}
Minghe Sun.
\newblock A tabu search heuristic procedure for solving the transportation
  problem with exclusionary side constraints.
\newblock {\em Journal of Heuristics}, 3(4):305--326, 1998.

\bibitem{syarif2003solving}
Admi Syarif and Mitsuo Gen.
\newblock Solving exclusionary side constrained transportation problem by using
  a hybrid spanning tree-based genetic algorithm.
\newblock {\em Journal of Intelligent Manufacturing}, 14(3):389--399, 2003.

\end{thebibliography}

\end{document}